\newtheorem{theorem}{Theorem}[section]
\newtheorem{lemma}[theorem]{Lemma}
\newtheorem{proposition}[theorem]{Proposition}
\newtheorem{defn}[theorem]{Definition}
\newcommand{\imod}[1]{\allowbreak\mkern4mu({\operator@font mod}\,\,#1)}
\newtheorem*{CorA}{Corollary~A}
\newtheorem*{CorB}{Corollary~B}
\newtheorem*{CorC}{Corollary~C}
\newtheorem*{CorD}{Corollary~D}
\newtheorem{athm}{Theorem}
\theoremstyle{definition}
\newtheorem{example}[theorem]{Example}
\begin{document}

\title[\textbf{On some products of finite groups}]{\textbf{On some products of finite groups}}

\author[A. Ballester-Bolinches]{A. Ballester-Bolinches}
\address{Department of Mathematics, Guangdong University of Education, 510310, Guangzhou,
People’s Republic of China}
\address{Departament de Matem\`atiques, Universitat de Val\`encia, Dr. Moliner 50, 46100 Burjassot, Val\`encia, Spain}
\email{adolfo.ballester@uv.es}

\author[S. Y. Madanha]{S. Y. Madanha}
\address{Department of Mathematics and Applied Mathematics, University of Pretoria, Pretoria, 0002, South Africa}
\email{sesuai.madanha@up.ac.za}

\author[M. C. Pedraza-Aguilera]{M. C. Pedraza-Aguilera}
\address{Instituto Universitario de Matem\'atica Pura y Aplicada, Universitat Polit\`ecnica de Val\`encia, 46022 Camino de Vera,  Val\`encia, Spain}
\email{mpedraza@mat.upv.es}

\author[X. Wu]{X . Wu}
\address{School of Mathematics, Suzhou University, Suzhou 215006 Jiang, People’s Republic of China}
\email{wxwjs1991@126.com}

\thanks{}

\subjclass[2010]{Primary 20D10, 20D20}

\date{\today}

\keywords{Finite groups, semidirect products, supersoluble groups, residuals}

\begin{abstract}
A classical result of Baer states that a finite group $ G $ which is the product of two normal supersoluble subgroups is supersoluble if and only if $ G' $ is nilpotent. In this article we show that if $ G=AB $ is the product of supersoluble (respectively, $ w $-supersoluble) subgroups $ A $ and $ B $, $ A $ is normal in $ G $, $ B $ permutes with every maximal subgroup of each Sylow subgroup of $ A $, then  $ G $ is supersoluble (respectively, $ w $-supersoluble) provided that $ G' $ is nilpotent. We also investigate products of subgroups defined above when $ A\cap B=1 $ and obtain more general results.
\end{abstract}

\maketitle
\section{Introduction}

All groups considered here will be finite.

A significant number of articles investigating the properties of groups expressible as a product of two supersoluble subgroups were published since the $1957$ paper by Baer \cite{Bae57}  in which he proved that a normal product $G = AB$ of two supersoluble subgroups $A$ and $B$ is supersoluble provided that  the derived subgroup $G'$ is nilpotent. There has been many generalisations of this theorem. Instead of having normal subgroups, certain permutability conditions were imposed on the factors. The case in which $A$ permutes with every subgroup of $B$ and $B$ permutes with every subgroup of $A$, that is, when $G$ is a \emph{mutually permutable product} of $A$ and $B$ is in fact one of the most interesting cases and has been investigated in detail (see \cite{BBERA10}, for a thorough review of results in this context and also \cite{AFG92} for general results on products).

In this article we study a weak form of a normal product arising quite frequently in the structural study of mutually permutable products and appears not to have been investigated in detail.


\begin{defn}
Let $G=AB$ be a product of subgroups $A$ and $B$. We say that $G$ is a \emph{weak normal product} of $A$ and $B$ if
\begin{itemize}
\item[(a)] $A$ is normal in $G$.
\item[(b)] $B$ permutes with all the maximal subgroups of Sylow subgroups of $A$.
\end{itemize}
\end{defn}

As an important first step in the study of weak normal products $G = AB$, and motivated by the mutually permutable case, we analyse the situation $A \cap B=1$. In this case they are semidirect products of $A$ and $B$.

\begin{defn} Let the group $G = AB$ be the weak normal product of $A$ and $B$ with $A$ normal in $G$. We say that $ G $ is a \emph{weak direct product} of $A$ and $B$ if $A \cap B = 1$. In this case we write $G = [A]B$.

\end{defn}

We study these products when the factors are supersoluble and widely supersoluble and analyse the behaviour of the residuals associated to these classes of groups. Recall that a widely supersoluble group, or $w$-\emph{supersoluble} group for short, is defined as a group $ G $ such that every Sylow subgroup of $G$ is $\mathbb{P}$-subnormal in $G$ (a subgroup $H$ of a group $G$ is $\mathbb{P}$-subnormal in $G$ whenever either $H=G$ or there exists a chain of subgroups $H=H_{0} \leqslant H_{1} \leqslant \cdots \leqslant H_{n-1} \leqslant H_{n}=G$, such that $| H_{i} {:} H_{i-1}|$ is a prime for every $i=1, \dots, n$).

The class of $w$-supersoluble groups, denoted $w\mathfrak{U} $, is a subgroup-closed saturated formation containing the subgroup-closed saturated formation $\mathfrak{U} $ of all supersoluble groups. Moreover $w$-supersoluble groups have a Sylow tower of supersoluble type (see \cite[Corollary]{VVT10}). \\



Our first aim is to show that the saturated formations of all supersoluble groups and w-supersoluble groups are closed under the formation of weak direct products.
\begin{athm}\label{A}
 Let $ G =[A]B $ be a weak direct product of $ A $ and $ B $. If $ A $ and $ B $ belong to $ \mathfrak{U} $, then $ G $ is also supersoluble.
\end{athm}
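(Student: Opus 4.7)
The plan is to argue by minimal counterexample, using the permutability hypothesis to force $B$ to act as scalars on a suitable chief factor.

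Let $G$ be a counterexample of minimal order. For any $K \trianglelefteq G$ with $K \leq A$, Dedekind's law gives $(A/K) \cap (BK/K) = K(A \cap B)/K = 1$, and the permutability condition carries over to $G/K$, so $G/K$ is again a weak direct product of the supersoluble groups $A/K$ and $BK/K$. Taking $N$ to be a minimal normal subgroup of $G$ contained in $F(A)$ (non-trivial since $A \neq 1$ is soluble, and normal in $G$ because $F(A)$ is characteristic in $A$), minimality of $G$ gives $G/N \in \mathfrak{U}$. Hence the $\mathfrak{U}$-residual satisfies $G^{\mathfrak{U}} \leq N$, and as $G \notin \mathfrak{U}$ we must have $G^{\mathfrak{U}} = N$, so $N$ is uniquely determined as an elementary abelian $p$-group. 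Saturation of $\mathfrak{U}$ rules out $N \leq \Phi(G)$, and a careful argument ruling out minimal normal subgroups of $G$ outside $F(A)$ (these would centralise $A$, and one must rule out such) produces a primitive structure on $G$ with $C_G(N) = N$.

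The central technical step is to show that the Sylow $p$-subgroup $P$ of $A$ containing $N$ equals $N$. From $C_A(N) = A \cap C_G(N) = N$ and the supersolubility of $A$, the faithful action of $A/N$ on $N$ has $A$-composition factors of order $p$, so $A/N$ stabilises a complete $\mathbb{F}_p$-flag in $N$ and thus embeds into upper triangular matrices. Consequently its Sylow $p$-subgroup $P/N$ is normal, making $P$ characteristic in $A$ and therefore normal in $G$. Then $[N,P] \trianglelefteq G$ is contained in $N$ and, by nilpotence of $P$, must be strictly smaller than $N$; minimality of $N$ forces $[N,P] = 1$, so $P \leq C_G(N) = N$, giving $P = N$.

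Finally, any maximal subgroup $M$ of $N = P$ permutes with $B$ by hypothesis, and $A \cap B = 1$ gives $BM \cap N = M$. Since $N \trianglelefteq BM$ this forces $M = BM \cap N \trianglelefteq BM$, so $B$ normalises every hyperplane of the $\mathbb{F}_p$-space $N$. By linear-algebra duality $B$ must then fix every line of $N$. Supersolubility of $A$ supplies an $A$-invariant line $L \leq N$; this $L$ is also $B$-invariant by the previous sentence, hence $G$-invariant. Uniqueness of $N$ then forces $|N| = p$, and combined with $G/N \in \mathfrak{U}$ this yields $G \in \mathfrak{U}$, contradicting the choice of $G$.

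The main obstacle to negotiate will be the middle step: leveraging the supersolubility of $A$ via triangularisation of its action on $N$ to make $P$ normal in $G$, then using the nilpotence of $P$ together with $C_G(N) = N$ to collapse $P$ onto $N$. The reduction to unique minimal normal in the first paragraph also requires care, since quotients $G/N'$ by a minimal normal $N'$ lying outside $A$ need not themselves be weak direct products, so the induction on such quotients is not entirely automatic.
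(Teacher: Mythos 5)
Your overall skeleton (minimal counterexample, $N=G^{\mathfrak U}$ minimal normal in $A$, saturation, and the closing ``$B$ permutes with every hyperplane of an elementary abelian normal subgroup, hence normalises every line'' argument) is sound, and the final duality step is a correct and slightly slicker version of what the paper does. But there is a genuine gap exactly where you flag one, and it is not a technicality to be smoothed over: it is the heart of the proof. Your central technical step requires $C_A(N)=N$ (you need the action of $A/N$ on $N$ to be faithful in order to triangularise and conclude that the Sylow $p$-subgroup $P$ of $A$ is normal, and you need $P\leq C_G(N)=N$ at the end). You propose to get this from primitivity of $G$, i.e.\ from $N$ being the \emph{unique} minimal normal subgroup of $G$. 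Nothing you have established delivers that. A minimal normal subgroup $N'$ with $N'\cap A=1$ cannot be removed by the usual formation trick: one computes $(G/N')^{\mathfrak U}=G^{\mathfrak U}N'/N'\cong N\neq 1$, so $G/N'$ is \emph{not} supersoluble and the embedding $G\hookrightarrow G/N\times G/N'$ is useless; moreover $G/N'$ need not be a weak direct product (if $N'\cap B=1$ the images of $A$ and $B$ intersect nontrivially), so induction does not apply either. No argument excluding such $N'$ is given. Worse, even granting uniqueness you must separately exclude the configuration in which $A$ itself centralises $N$ --- for instance $A$ elementary abelian with $N<A$ --- where $C_A(N)=A>N$ and the triangularisation step collapses entirely. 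That configuration is precisely the hard case.

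For comparison, the paper's proof deliberately avoids primitivity. It shows $\Phi(A)=1$ and that $\mathbf{F}(A)$ is an elementary abelian Sylow $p$-subgroup of $A$, hence a completely reducible $A$-module; writing $\mathbf{F}(A)=N\times Z$ and $N=L\times D$ with $|L|=p$ (supersolubility of $A$), the hyperplane $DZ$ of $\mathbf{F}(A)$ permutes with $B$, so $DZ=A\cap(DZ)B$ is normalised by $B$ and by $A$, hence normal in $G$, forcing $D=1$ and $|N|=p$; the remaining case $A$ elementary abelian is killed by showing every maximal subgroup of $A$ is normal in $G$, whence $N\leq\Phi(A)=1$. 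You would need to import an argument of this kind (or some other substitute) before your claim $C_G(N)=N$ becomes available; as written, the proposal does not constitute a proof.
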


\begin{CorA}\label{Gwsupersoluble}
Let $ G =[A]B $ be a weak direct product of $ A $ and $ B $. If $ A $ and $ B $ belong to $ w\mathfrak{U} $, then $ G $ is w-supersoluble.
\end{CorA}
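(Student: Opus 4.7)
The plan is a minimal counterexample argument, using throughout that $w\mathfrak{U}$ is a subgroup-closed saturated formation. Suppose the corollary fails and let $G = [A]B$ be a counterexample of smallest order. Since $w$-supersoluble groups are soluble with a Sylow tower of supersoluble type, so is $G$. The first task is to check that the hypothesis passes to the quotient $G/N$ whenever $N$ is a normal subgroup of $G$ contained in $A$. Writing $G/N = (A/N)(BN/N)$, Dedekind's law gives $(A/N) \cap (BN/N) = N/N = 1$; the factor $BN/N \cong B$ remains $w$-supersoluble; and the permutability of $B$ with maximal subgroups of Sylow subgroups of $A$ projects to $G/N$. By minimality of $|G|$, we obtain $G/N \in w\mathfrak{U}$ for every such $N$.

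Standard saturated-formation manipulations then yield that $G$ has a unique minimal normal subgroup $N$, that $\Phi(G) = 1$, and that $N = F(G) = C_G(N)$ is an elementary abelian $p$-group. Since $A$ is nontrivial and normal, $N \leq A$. Because $A$ has a Sylow tower of supersoluble type and $O_r(G) = 1$ for every prime $r \neq p$, the prime $p$ is forced to be the largest prime in $\pi(A)$, and the Sylow $p$-subgroup of $A$ coincides with $N$. Writing $A = N \rtimes H$ with $H$ a $p'$-Hall subgroup of $A$ acting faithfully on $N$, I would next exploit the permutability: every maximal subgroup $M$ of the Sylow subgroup $N$ satisfies $BM \leq G$, and $A \cap B = 1$ forces $BM \cap N = M$, hence $B \leq N_G(M)$. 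Therefore every hyperplane of the $\mathbb{F}_p$-space $N$ is $B$-invariant, so $B$ acts on $N$ by scalars in $\mathbb{F}_p^*$.

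With this, every subspace of $N$ is $B$-invariant, the $G$-irreducibility of $N$ reduces to $H$-irreducibility, and the subgroup $NB = N \rtimes B$ (a proper subgroup unless $N = A$, the latter case being handled directly) is a weak direct product of the $w$-supersoluble groups $N$ and $B$; by minimality $NB \in w\mathfrak{U}$. The key remaining step is to combine this with the $w$-supersolubility of $A$ to force $|N| = p$: the idea is that in any $\mathbb{P}$-subnormal chain for a Sylow subgroup of $H$ in $A$, the successive intersections with $N$ yield $H_q$-invariant proper subspaces of $N$, and collating this information across all Sylow subgroups of $H$ produces an $H$-invariant line in $N$, contradicting irreducibility as soon as $\dim_{\mathbb{F}_p} N \geq 2$. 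Once $|N| = p$, each Sylow subgroup of $G$ is $\mathbb{P}$-subnormal in $G$ by lifting the corresponding chain from $G/N \in w\mathfrak{U}$ through the prime-order kernel, so $G \in w\mathfrak{U}$, a contradiction.

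The main obstacle is precisely this last structural step: showing that the $w$-supersolubility of $A = N \rtimes H$ together with a faithful irreducible action of $H$ on the Sylow $p$-subgroup $N$ forces $\dim N = 1$. I expect that this follows from the local characterisation of $w\mathfrak{U}$ given in \cite{VVT10}; once extracted as an auxiliary lemma, the remainder of the argument is a bookkeeping reduction that essentially parallels the proof of Theorem A.
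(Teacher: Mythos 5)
Your plan founders on the step you yourself flag as the main obstacle, and the problem is not merely that the step is unproven: the auxiliary lemma you hope to extract from \cite{VVT10} is false. You want to show that if $A=N\rtimes H$ is $w$-supersoluble, $N$ is the Sylow $p$-subgroup for the largest prime $p$, and $H$ acts faithfully and irreducibly on $N$, then $|N|=p$. Take $p=7$, $N=C_7\times C_7$ and $H=S_3$ acting via the standard two-dimensional representation over $\mathbb{F}_7$ (irreducible and faithful). Since $3$ and $2$ both divide $7-1$, the $3$-cycle and the transpositions are diagonalisable over $\mathbb{F}_7$, so each Sylow subgroup $Q$ of $H$ admits a chain $Q\leqslant LQ\leqslant NQ\leqslant A$ with indices $7,7$ and $|H:Q|$, all prime; together with $N\leqslant NC_3\leqslant A$ this shows every Sylow subgroup of $A$ is $\mathbb{P}$-subnormal, i.e. $A\in w\mathfrak{U}$, yet $\dim_{\mathbb{F}_7}N=2$. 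This is exactly the gap between $w\mathfrak{U}$ and $\mathfrak{U}$: a primitive $w$-supersoluble group need not have socle of prime order (if your lemma were true, Huppert's criterion would force $w\mathfrak{U}=\mathfrak{U}$). Your heuristic that ``collating'' the invariant flags of the various Sylow subgroups of $H$ yields an $H$-invariant line also fails in this example: each Sylow subgroup stabilises its own eigenlines, but there is no common one. A secondary issue is your claim that $G$ has a unique minimal normal subgroup with $N=C_G(N)=F(G)$: the hypothesis only passes to quotients $G/N$ with $N\leqslant A$ (Lemma 2.1(a)), so the standard primitivity reduction is not available for minimal normal subgroups not contained in $A$; the paper accordingly only claims uniqueness of the minimal normal subgroup inside $A$ and $\Phi(A)=1$.

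The paper's proof avoids any attempt to bound $|N|$ and instead verifies directly that every Sylow subgroup of $G$ is $\mathbb{P}$-subnormal. It splits into two cases according to whether $AB_q$ is proper for all Sylow $q$-subgroups $B_q$ of $B$ with $q\neq p$ (in which case induction applies to $AB_q$ and Lemma 2.2 transports $\mathbb{P}$-subnormality of $G_q$ up through $G_qN$), or $G=AB_q$ for some $q$, in which case it forms $T=A_pA_qB_q$, observes that $A_pA_q$ is metanilpotent and $w$-supersoluble, hence supersoluble by \cite[Theorem 2.13(1)]{VVT10}, and applies Theorem A to conclude $T$ is supersoluble; $\mathbb{P}$-subnormality of all Sylow subgroups then follows from Lemma 2.2. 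You would need to abandon the module-theoretic endgame and adopt an argument of this kind.
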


Our second aim is to show that the product of the supersoluble (respectively, w-supersoluble) residuals of the factors of weak direct products is just the supersoluble (respectively, w-supersoluble) residual of the group.

\begin{athm}\label{B}
Suppose that $ \mathfrak{F} =\mathfrak{U}$ or $ \mathfrak{F} =w\mathfrak{U}$.  Let $ G =[A]B $ be a weak direct product of $ A $ and $ B $. Then
\begin{center}
$ G^{\mathfrak{F}}=A^{\mathfrak{F}}B^{\mathfrak{F}} $.
\end{center}
\end{athm}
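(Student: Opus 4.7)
The easy inclusion $A^{\mathfrak{F}}B^{\mathfrak{F}}\leq G^{\mathfrak{F}}$ I would handle directly: since $A^{\mathfrak{F}}$ is characteristic in $A\triangleleft G$, the product $A^{\mathfrak{F}}B^{\mathfrak{F}}$ is a subgroup of $G$, and because $\mathfrak{F}$ is subgroup closed and $G/G^{\mathfrak{F}}\in\mathfrak{F}$, both $A/(A\cap G^{\mathfrak{F}})$ and $B/(B\cap G^{\mathfrak{F}})$ lie in $\mathfrak{F}$, forcing $A^{\mathfrak{F}},B^{\mathfrak{F}}\leq G^{\mathfrak{F}}$. For the reverse inclusion the plan is a minimal counterexample argument proceeding in two stages.

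The first stage reduces to $A\in\mathfrak{F}$. If $A^{\mathfrak{F}}\neq 1$ I would first verify that $\bar G:=G/A^{\mathfrak{F}}$ is again a weak direct product of $\bar A=A/A^{\mathfrak{F}}$ and $\bar B=BA^{\mathfrak{F}}/A^{\mathfrak{F}}\cong B$. The only subtle check is the permutability, which transfers because every maximal subgroup of a Sylow $p$-subgroup of $\bar A$ takes the shape $M_{0}A^{\mathfrak{F}}/A^{\mathfrak{F}}$ with $M_{0}$ a maximal subgroup of a Sylow $p$-subgroup of $A$, and $BM_{0}=M_{0}B$ passes to the quotient. The induction hypothesis applied to $\bar G$ then gives $\bar G^{\mathfrak{F}}=\bar A^{\mathfrak{F}}\bar B^{\mathfrak{F}}=B^{\mathfrak{F}}A^{\mathfrak{F}}/A^{\mathfrak{F}}$, which pulls back to $G^{\mathfrak{F}}\leq A^{\mathfrak{F}}B^{\mathfrak{F}}$, contradicting the counterexample status of $G$.

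Assume therefore $A\in\mathfrak{F}$. The identity $(G/A)^{\mathfrak{F}}=G^{\mathfrak{F}}A/A$ together with $G/A\cong B$ yields $G^{\mathfrak{F}}A=B^{\mathfrak{F}}A$, and writing $C:=G^{\mathfrak{F}}\cap A$, Dedekind's law combined with $B^{\mathfrak{F}}\leq G^{\mathfrak{F}}$ gives $G^{\mathfrak{F}}=B^{\mathfrak{F}}C$; since $C\cap B^{\mathfrak{F}}\leq A\cap B=1$, the claim reduces to $C=1$. A second inductive pass, applied to $G/N$ for any minimal normal subgroup $N$ of $G$ contained in $C$, forces $C$ itself to be minimal normal in $G$, hence an elementary abelian $p$-group lying in a Sylow $p$-subgroup $P$ of $A$. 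Since $A\in\mathfrak{F}\subseteq w\mathfrak{U}$ has chief factors of prime order, there is an $A$-invariant $C_{1}<C$ of index $p$, and the final step is to extend $C_{1}$ to a maximal subgroup $M$ of $P$ with $M\cap C=C_{1}$; then $BM=MB$ is a subgroup of $G$, the normality of $C$ in $G$ makes $C_{1}=BM\cap C$ normal in $BM$, and since $\langle A,BM\rangle=AB=G$ and $C_{1}$ is already $A$-normal, $C_{1}\trianglelefteq G$, contradicting the minimality of $C$.

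The hardest part will be guaranteeing the existence of such an $M$, which can fail precisely when $C/C_{1}\leq\Phi(P/C_{1})$; I expect overcoming this subcase to require varying $C_{1}$ over all $A$-invariant maximal subgroups of $C$, together with an auxiliary argument exploiting the Sylow tower of supersoluble type or the $\mathbb{P}$-subnormality available in the class $\mathfrak{F}$.
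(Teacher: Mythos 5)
Your two reductions are sound and essentially reproduce step (i) of the paper's argument: passing to $G/A^{\mathfrak{F}}$ (Lemma~\ref{factor}(a)) reduces to the case $A\in\mathfrak{F}$, and the Dedekind computation together with a second induction correctly shows that in a minimal counterexample $G^{\mathfrak{F}}=B^{\mathfrak{F}}C$ with $C=G^{\mathfrak{F}}\cap A$ a minimal normal subgroup of $G$. The problems are concentrated in your final step. First, the premise that $A\in w\mathfrak{U}$ has chief factors of prime order is false: $w\mathfrak{U}$ properly contains $\mathfrak{U}$, and a group all of whose chief factors have prime order is supersoluble, so for $\mathfrak{F}=w\mathfrak{U}$ the $A$-invariant subgroup $C_{1}<C$ of index $p$ need not exist. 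Second, even granting $C_{1}$ and a maximal subgroup $M$ of $P$ with $M\cap C=C_{1}$ (whose existence you rightly flag as doubtful when $C\leq C_{1}\Phi(P)$), the conclusion $C_{1}\trianglelefteq G$ contradicts the minimality of $C$ only when $C_{1}\neq 1$; what your argument actually yields is $C_{1}=1$, i.e.\ $|C|=p$. The configuration $G^{\mathfrak{F}}=B^{\mathfrak{F}}\times C$ with $|C|=p$ is not excluded by anything you have written, and it is precisely the hard case.

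The paper eliminates that case by a different endgame. Using $Core_{G}(B)=1$ it shows $AK\in\mathfrak{F}$ for every proper subgroup $K$ of $B$, so $B$ is $\mathfrak{F}$-critical and hence a minimal non-supersoluble group with $B^{\mathfrak{F}}$ a $q$-group for a single prime $q$; it then proves that $G^{\mathfrak{F}}=B^{\mathfrak{F}}\times N$ is elementary abelian, takes an $\mathfrak{F}$-projector $K$ of $B$ complementing $B^{\mathfrak{F}}$, and compares the order of $Z=AK$ with that of an $\mathfrak{F}$-projector of $G$ containing it to force $|N|=1$. Some input of this kind (critical-group structure plus projector/complement theory from \cite{DH92}) appears unavoidable: manufacturing a proper $G$-invariant subgroup of $C$ can never, on its own, rule out $|C|=p$. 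Your proposal therefore has a genuine gap at the point where the real work of the theorem begins.
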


We now analyse the behaviour of weak normal products with respect to the formations of all supersoluble and w-supersoluble groups. Our next result shows that Baer's theorem can be generalised in this new direction:
\begin{athm}\label{C}
 Let $ G=AB $ be a weak normal product of A and B. If $ G'$ is nilpotent, $A$ is normal in $G$, and $ A, B\in \mathfrak{U} $, then $ G \in \mathfrak{U}$.
\end{athm}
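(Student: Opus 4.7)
The approach is a minimal counterexample argument. Let $G$ be a counterexample of minimum order. First one checks that the hypotheses pass to any quotient $G/K$ ($K \trianglelefteq G$): the factorisation becomes $G/K=(AK/K)(BK/K)$ with $AK/K$ normal supersoluble and $BK/K$ supersoluble; $(G/K)'$ is nilpotent; and the permutability of $BK/K$ with maximal subgroups of Sylow subgroups of $AK/K$ survives because each such maximal subgroup lifts to a maximal subgroup of a Sylow subgroup of $A$ modulo~$K$. Since $\mathfrak{U}$ is a subgroup-closed saturated formation, the standard minimal-counterexample reductions force $G$ to have a unique minimal normal subgroup $N$, $\Phi(G)=1$, $F(G)=C_{G}(F(G))=N$, and a complement $H$ of $N$ in $G$, so that $G=[N]H$. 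Moreover $N$ is elementary abelian of exponent $p$ with $|N|\geq p^{2}$: otherwise $G/N\in\mathfrak{U}$ together with $|N|=p$ would yield $G\in\mathfrak{U}$.

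Next, $N\cap A$ is normal in $G$, so by minimality of $N$ it equals $1$ or $N$. If $N\cap A=1$, then $NA/A$ is a nontrivial normal subgroup of the supersoluble group $G/A\cong B/(A\cap B)$, minimal normal by the subgroup correspondence, hence cyclic of order $p$; this forces $|N|=p$, a contradiction. Therefore $N\leq A$. Because $A\in\mathfrak{U}$ and $N\trianglelefteq A$, the $\mathbb{F}_{p}A$-module $N$ admits a submodule $V$ with $V\trianglelefteq A$, $V\leq N$ and $|N:V|=p$. The aim is to show that $V$ is in fact $G$-invariant; since $V$ is a proper nontrivial subgroup of $N$, this will contradict the minimality of $N$ and finish the proof.

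Let $P$ be a Sylow $p$-subgroup of $A$ containing $N$. By the weak normal product hypothesis, $MB=BM$ for every maximal subgroup $M$ of $P$. The plan is to select a maximal subgroup $M$ of $P$ with $V\leq M$ and $M\cap N=V$, so that $MB$ is a subgroup in which $MB\cap N$ is automatically normal; an element chase then establishes $MB\cap N=V$, whence $V$ is normalised by $B$ and (together with its $A$-invariance) by $G=AB$, yielding the contradiction.

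The main obstacle is this last step. First, ensuring the existence of a maximal subgroup $M$ of $P$ with $M\cap N=V$, rather than merely $V\leq M$, requires $N\not\leq\Phi(P)$; this position property of $N$ in $P$ must be established as an extra feature of the minimal counterexample, presumably by exploiting that $G'\leq N$ is nilpotent (so $G/N$ is abelian and $H$ acts on $N$ through an abelian irreducible action) and that $\Phi(G)=1$, and, if it fails for this choice of $N$ and $V$, by replacing $V$ with a more suitable $A$-invariant subgroup of $P$ chosen to avoid $\Phi(P)$. Secondly, the equality $MB\cap N=V$ is a coset calculation: if $mb\in N$ with $m\in M$ and $b\in B$, then $b=m^{-1}(mb)\in MN=P$, so $b\in A\cap B\cap P$; analysing the cosets of $V$ in $M$ that can meet $Nb^{-1}$ as $b$ ranges over $A\cap B\cap P$ then pins the intersection $MB\cap N$ down to $V$.
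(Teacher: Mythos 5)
Your reductions are sound up to and including the choice of $V$: the hypotheses do pass to quotients, the minimal counterexample is primitive with $N=C_{G}(N)=F(G)$ elementary abelian of order at least $p^{2}$, $N\leq A$, and supersolubility of $A$ gives an $A$-invariant subgroup $V\leq N$ with $|N:V|=p$. Your first worry (finding a maximal subgroup $M$ of a Sylow $p$-subgroup $P$ of $A$ with $M\cap N=V$) in fact disappears: since $G'$ is nilpotent and $C_{G}(N)=N$, we get $1\neq G'\leq F(G)=N$, hence $G'=N$, the complement $H\cong G/N$ is abelian with $\mathbf{O}_{p}(H)=1$ by \cite[Lemma~A.13.6]{DH92}, so $N$ is the full Sylow $p$-subgroup of $G$ and of $A$; thus $P=N$ and $M=V$ itself is the required maximal subgroup.

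The genuine gap is the ``coset calculation''. With $P=N$ and $M=V$ one has, for $vb\in N$ with $v\in V$ and $b\in B$, that $b\in B\cap N$, so $VB\cap N=V(B\cap N)$; this equals $V$ if and only if $B\cap N\leq V$. Nothing in your argument forces the $A$-invariant hyperplane $V$ to contain $B\cap N$ (think of $N$ of rank $2$ on which $A$ acts with two distinct eigenlines, neither containing $B\cap N$), and no amount of analysing cosets will change the identity $VB\cap N=V(B\cap N)$. This is precisely where the paper's proof takes a different route: it splits into the case $G=BN$, where $B\cap N$ is normalised by $B$ and by the abelian $N$, hence is a normal subgroup of $G$ which must be trivial, so that $G=[N]B$ is a weak direct product handled by Theorem~\ref{A}; and the case $BN<G$, where minimality makes $BN$ supersoluble, so both $A/N$ and the Hall $p'$-part of $B$ have exponent dividing $p-1$, whence $G/N$ is abelian of exponent dividing $p-1$ and \cite[Theorem~B.9.8]{DH92} forces $|N|=p$, a contradiction. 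You need one of these two extra ingredients (or a substitute for them) to close the argument; as written, the proposal does not prove the theorem.
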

As a corollary, we obtain the result for $w \mathfrak{U} $-groups.
\begin{CorB}\label{G'nilpotentwsupersoluble}
 Let the group $G=AB$ be a weak normal product of $ w\mathfrak{U} $-subgroups $A$ and $B$. If $G'$ is nilpotent and $A$ is normal in $G$,  then $G$ belongs to $w \mathfrak{U} $.
\end{CorB}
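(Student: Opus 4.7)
The plan is to deduce Corollary~B from Theorem~C by upgrading the hypothesis on the factors from $w$-supersoluble to supersoluble. The starting observation is that $A$ and $B$ are subgroups of $G$, so $A', B' \leq G'$; since $G'$ is nilpotent by hypothesis, the derived subgroups of $A$ and $B$ are nilpotent. Hence $A$ and $B$ are $w$-supersoluble groups with nilpotent derived subgroups.

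The crux is then the auxiliary claim that a $w$-supersoluble group with nilpotent derived subgroup is supersoluble. I would prove this by taking a minimal counterexample $X$. Using that $w\mathfrak{U}$ and $\mathfrak{U}$ are both subgroup-closed saturated formations, routine reductions force $X$ to have a unique minimal normal subgroup $N$, elementary abelian of order $p^s$ with $s > 1$ and $N \not\leq \Phi(X)$, so that $X = N \rtimes M$ with $M$ supersoluble. Since $X$ is metanilpotent one has $X' \leq F(X)$, and a short $[F(X), N]$ computation using that $N \leq O_p(X)$ yields $F(X) \leq C_X(N)$; therefore $X/C_X(N)$ is abelian, and by Schur's lemma it embeds cyclically into $\mathbb{F}_{p^s}^{*}$, acting on $N \cong \mathbb{F}_{p^s}$ by field multiplication.

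For each prime $q$ dividing $|X/C_X(N)|$, let $Q$ be a Sylow $q$-subgroup of $X$. The $\mathbb{P}$-subnormality of $Q$ in $X$ intersected with $NQ$ produces a prime-index chain from $Q$ to $NQ = N \rtimes Q$; since $|NQ : Q| = p^s$, this chain has length $s$ with each step of index $p$, giving a $Q$-invariant $\mathbb{F}_p$-composition series of $N$ with one-dimensional factors. Maschke's theorem (applicable since $q \neq p$) together with the cyclic embedding into $\mathbb{F}_{p^s}^{*}$ then forces the $q$-part of $|X/C_X(N)|$ to divide $p - 1$. Since this holds for every prime divisor, $|X/C_X(N)|$ divides $p - 1$, and so $X$ acts on $N$ by scalars in $\mathbb{F}_p^{*}$. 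But then every one-dimensional $\mathbb{F}_p$-subspace of $N$ is $X$-invariant, contradicting the simplicity of the $\mathbb{F}_p X$-module $N$ together with $s > 1$.

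Once the auxiliary claim is in hand, both $A$ and $B$ lie in $\mathfrak{U}$, and Theorem~C applies verbatim to deliver $G \in \mathfrak{U} \subseteq w\mathfrak{U}$, as required. The main obstacle is the dimension bound inside the auxiliary claim: extracting from the $\mathbb{P}$-subnormality of a single Sylow $q$-subgroup the one-dimensionality of all $\mathbb{F}_p Q$-composition factors of $N$, and then combining this across all prime divisors to collapse the field of scalars from $\mathbb{F}_{p^s}$ down to $\mathbb{F}_p$. The remaining formation-theoretic bookkeeping and the final appeal to Theorem~C are routine.
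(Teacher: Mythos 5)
Your proposal is correct and follows essentially the same route as the paper: the paper likewise observes that $A'\leq G'$ and $B'\leq G'$ force $A$ and $B$ to be metanilpotent, concludes they are supersoluble, and then applies Theorem~C to get $G\in\mathfrak{U}\subseteq w\mathfrak{U}$. The only difference is that the paper simply cites \cite[Theorem 2.11]{VVT10} for the auxiliary claim that a metanilpotent $w$-supersoluble group is supersoluble, whereas you reprove it from scratch; your minimal-counterexample argument for that claim (primitive reduction, $X/C_X(N)$ cyclic in $\mathbb{F}_{p^s}^{*}$, and the $\mathbb{P}$-subnormal Sylow chain intersected with $NQ$ forcing scalar action over $\mathbb{F}_p$) is sound.
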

Our second objective is to investigate the residuals of weak normal products. Unfortunately, it does not follow that $ G^{\mathfrak{U}}=A^{\mathfrak{U}}B^{\mathfrak{U}} $ when $ G $ is a weak normal product as the following example shows:

\begin{example}\label{counterweaknormal}
Let
\begin{align*}A=\langle g_2,g_4,g_5,g_6,g_7\mid \mbox{}&g_2^3=g_4^3=g_5^3=g_6^3=g_7^3=1,\\
                                                &g_4^{g_2}=g_4g_6, g_5^{g_2}=g_5g_7, g_6^{g_2}=g_6, g_7^{g_2}=g_7,\\
                                                &g_5^{g_4}=g_5, g_6^{g_4}=g_6, g_7^{g_4}=g_7, \\
  &g_6^{g_5}=g_6, g_7^{g_5}=g_7,\\& g_7^{g_6}=g_7\rangle.
\end{align*}
Let $Q=\langle b\rangle\cong C_4$ act on $A$ via
\begin{align*}
  g_2^b&=g_2,&g_4^{b}&=g_4g_5,&g_5^b&=g_4g_5^2,&g_6^{b}&=g_6g_7,&g_7^{b}&=g_6g_7^2.
\end{align*}
Let $G=[A]Q$ be the corresponding semidirect product.

Note that $A'=\Phi(A)=\langle g_6, g_7\rangle$. Let $A_0=\langle g_4, g_5\rangle$. Then $A_0$ is not a normal subgroup of $A$, but is normalised by $Q$. Let $B=A_0\langle b\rangle$, then $Core_G(B)=1$. Furthermore, $B$ permutes with the $13$ maximal subgroups of $A$. The supersoluble residual of $G$ is $\langle g_4, g_5, g_6,g_7\rangle$, giving a quotient isomorphic to $C_{12}$. Consequently $ G^{\mathfrak{U}}\neq A^{\mathfrak{U}}B^{\mathfrak{U}} $. This group corresponds to \texttt{SmallGroup(972, 406)} of GAP.
\end{example}

We prove the following:

\begin{athm}\label{D}
Let the group $G=AB$ be a  product of the subgroups $A$ and $B$. Assume that $A$ is a normal subgroup of $G$ and  every Sylow subgroup of $ B $ permutes with every maximal  subgroup of every Sylow subgroup  of $ A $. If $G'$ is nilpotent, then $ G^{\mathfrak{U}}=A^{\mathfrak{U}}B^{\mathfrak{U}} $.

\end{athm}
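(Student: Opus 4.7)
The plan is to establish the two inclusions $A^{\mathfrak{U}}B^{\mathfrak{U}} \leq G^{\mathfrak{U}}$ and $G^{\mathfrak{U}} \leq A^{\mathfrak{U}}B^{\mathfrak{U}}$ separately. The first, easier direction will follow from standard residual properties; for the reverse I intend to pass to the quotient $G/A^{\mathfrak{U}}B^{\mathfrak{U}}$ and apply Theorem~C. Since $A\trianglelefteq G$, the residual $A^{\mathfrak{U}}$ is characteristic in $A$ and hence normal in $G$, so $N = A^{\mathfrak{U}}B^{\mathfrak{U}}$ is a subgroup. Both $AG^{\mathfrak{U}}/G^{\mathfrak{U}}$ and $BG^{\mathfrak{U}}/G^{\mathfrak{U}}$ are subgroups of the supersoluble quotient $G/G^{\mathfrak{U}}$, which forces $A^{\mathfrak{U}}, B^{\mathfrak{U}} \leq G^{\mathfrak{U}}$ and hence $N \leq G^{\mathfrak{U}}$.

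For the reverse inclusion, put $\bar G = G/N$, $\bar A = AN/N$, and $\bar B = BN/N$. Then $\bar G = \bar A \bar B$, $\bar A$ is normal in $\bar G$, and both $\bar A$ and $\bar B$ belong to $\mathfrak{U}$ as quotients of $A/A^{\mathfrak{U}}$ and $B/B^{\mathfrak{U}}$, respectively. The derived subgroup $\bar G'$ is a homomorphic image of $G'$, so it is nilpotent. Consequently, to deduce $\bar G \in \mathfrak{U}$ (and thus $G^{\mathfrak{U}} \leq N$) from Theorem~C, it only remains to verify that $\bar B$ permutes with every maximal subgroup of every Sylow subgroup of $\bar A$.

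The heart of the argument is to upgrade the Sylow-by-Sylow permutability hypothesis on $B$ to a statement about all of $B$. Because $G'$ is nilpotent, $G$, and therefore $B$, is soluble; by Hall's theorem $B$ admits a Sylow basis $\{B_{p_1},\dots,B_{p_r}\}$ of pairwise permutable Sylow subgroups with $B = B_{p_1}\cdots B_{p_r}$. If $M$ is a maximal subgroup of a Sylow subgroup of $A$, the hypothesis gives $B_{p_i}M = MB_{p_i}$ for each $i$, and an induction that moves $M$ past each factor in turn, using these pairwise permutabilities, yields $BM = MB_{p_1}\cdots B_{p_r} = MB$. A standard correspondence-theorem check shows that every maximal subgroup of a Sylow subgroup of $\bar A$ is the image of a maximal subgroup of a Sylow subgroup of $A$ (containing $P \cap N$ for the appropriate Sylow $P$), so the permutability descends to $\bar G$ and Theorem~C delivers $\bar G \in \mathfrak{U}$.

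The main obstacle is precisely this upgrade from Sylow-by-Sylow to full permutability by $B$. It is exactly here that the nilpotence of $G'$ is used in an essential way: it guarantees that $B$ is soluble, which in turn makes a Sylow basis available and allows the commutation argument above. This step bridges the hypothesis of Theorem~D to that of Theorem~C and makes the residual equality meaningful under the weaker permutability assumption.
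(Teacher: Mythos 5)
Your proposal has a genuine gap at its central step: you form the quotient $\bar G = G/N$ with $N = A^{\mathfrak{U}}B^{\mathfrak{U}}$, but you only establish that $N$ is a \emph{subgroup} of $G$ (which does follow from $A^{\mathfrak{U}}\trianglelefteq G$), not that it is \emph{normal} in $G$. Since $B^{\mathfrak{U}}$ need not be normal in $G$, neither need $N$ be, and the quotient $G/N$ is simply not defined. The normality of $A^{\mathfrak{U}}B^{\mathfrak{U}}$ is essentially equivalent to the conclusion of the theorem (once the theorem holds, $N=G^{\mathfrak{U}}$ is of course normal), so assuming it begs the question; Example~1.5 of the paper shows that without the hypothesis on $G'$ the subgroup $A^{\mathfrak{U}}B^{\mathfrak{U}}$ can genuinely fail to be normal, so this is not a formality one can wave away. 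The best one gets cheaply from $G'$ nilpotent is that $B^{\mathfrak{U}}\leq B'\leq G'$ is subnormal in $G$, hence $N\leq \mathbf{F}(G)$ --- but subnormal is not normal, and the paper has to work hard (Frattini argument, reduction to $A=A_p$, and an appeal to supersoluble normalizers and $\mathfrak{U}$-central chief factors via \cite[V, 3.2]{DH92}) precisely to get around this.

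The parts of your argument that do work are the easy inclusion $A^{\mathfrak{U}}B^{\mathfrak{U}}\leq G^{\mathfrak{U}}$ and the upgrade from Sylow-by-Sylow permutability to $BM=MB$: since $G$ is soluble, a Sylow basis $B=B_{p_1}\cdots B_{p_r}$ of pairwise permutable Sylow subgroups exists, and the set-theoretic shuffling you describe is correct, so the hypotheses of Theorem~D do imply that $G$ is a weak normal product. But that observation alone does not let you invoke Theorem~C, because Theorem~C needs $A$ and $B$ themselves to be supersoluble, and your only route to supersoluble factors is the illegitimate quotient by $N$. The paper instead runs a minimal counterexample argument: it reduces to the case $A\in\mathfrak{U}$ and $G^{\mathfrak{U}}=B^{\mathfrak{U}}N$ an elementary abelian $p$-group, shows $A=A_p$ and $G=A_pB$, applies Theorem~C only to the genuine subgroup $T=AB_{p'}$ (where both factors are supersoluble), and derives the final contradiction from the $\mathfrak{U}$-centrality of $N$. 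You would need an argument of that kind, or an independent proof that $A^{\mathfrak{U}}B^{\mathfrak{U}}$ is normal, to close the gap.
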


\begin{CorC}
 Let the group $G=AB$ be a  product of the subgroups $A$ and $B$. Assume that $A$ is a normal subgroup of $G$ and  every Sylow subgroup of $ B $ permutes with every maximal subgroup of every Sylow subgroup  of $ A $. If $G'$ is nilpotent, then $ G^{w \mathfrak{U}}=A^{w \mathfrak{U}}B^{w \mathfrak{U}} $.

\end{CorC}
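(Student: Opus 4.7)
The plan is to deduce Corollary C from Theorem D by induction on $|G|$, with Corollary B providing the base case.

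For the easy inclusion $A^{w\mathfrak{U}}B^{w\mathfrak{U}}\leq G^{w\mathfrak{U}}$: the residual $A^{w\mathfrak{U}}$ is characteristic in $A$ and $A\lhd G$, so $A^{w\mathfrak{U}}\lhd G$; the image of $A$ in the $w$-supersoluble quotient $G/G^{w\mathfrak{U}}$ lies in $w\mathfrak{U}$ by subgroup-closedness, forcing $A^{w\mathfrak{U}}\leq G^{w\mathfrak{U}}$, and the same argument gives $B^{w\mathfrak{U}}\leq G^{w\mathfrak{U}}$.

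For the reverse inclusion, Theorem D supplies $G^{\mathfrak{U}}=A^{\mathfrak{U}}B^{\mathfrak{U}}$, which lies inside the nilpotent subgroup $G'$ and is therefore itself nilpotent. I would then induct on $|G|$. In the base case $A,B\in w\mathfrak{U}$: $w$-supersolubility of $B$ provides a Sylow tower of supersoluble type, so $B=B_{p_1}\cdots B_{p_k}$ as sets; from $B_{p_i}M=MB_{p_i}$ (for every $i$ and every maximal subgroup $M$ of each Sylow subgroup of $A$) one iteratively pushes $M$ through the product to obtain $BM=MB$, so the weak normal product hypothesis of Corollary B holds, and Corollary B yields $G\in w\mathfrak{U}$, giving $G^{w\mathfrak{U}}=1=A^{w\mathfrak{U}}B^{w\mathfrak{U}}$. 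For the inductive step, if $A^{w\mathfrak{U}}\neq 1$, set $\bar G=G/A^{w\mathfrak{U}}$ and note that $\bar A=A/A^{w\mathfrak{U}}\in w\mathfrak{U}$ is normal in $\bar G$, that the Sylow-permutability hypothesis transfers to $\bar G$ (images of pairwise permuting subgroups permute), and that $\bar G'=G'A^{w\mathfrak{U}}/A^{w\mathfrak{U}}$ is nilpotent; the inductive hypothesis applied to $\bar G$ gives $(\bar G)^{w\mathfrak{U}}=(\bar A)^{w\mathfrak{U}}(\bar B)^{w\mathfrak{U}}=B^{w\mathfrak{U}}A^{w\mathfrak{U}}/A^{w\mathfrak{U}}$, and using $(\bar G)^{w\mathfrak{U}}=G^{w\mathfrak{U}}/A^{w\mathfrak{U}}$ (which holds as $A^{w\mathfrak{U}}\leq G^{w\mathfrak{U}}$) one concludes $G^{w\mathfrak{U}}=A^{w\mathfrak{U}}B^{w\mathfrak{U}}$.

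The main obstacle is the intermediate asymmetric case in which $A\in w\mathfrak{U}$ but $B^{w\mathfrak{U}}\neq 1$, since then $A^{w\mathfrak{U}}=1$ precludes a further reduction via modding out $A^{w\mathfrak{U}}$, and $B^{w\mathfrak{U}}$ need not be $G$-normal. The plan for this case is to exploit the structural information from Theorem D: the nilpotency of $G^{\mathfrak{U}}=A^{\mathfrak{U}}B^{\mathfrak{U}}$ makes each of its Sylow subgroups $G$-normal, and combined with the Sylow-permutability hypothesis one aims to produce a nontrivial $G$-invariant subgroup $N\leq B^{w\mathfrak{U}}$ (for instance by analysing, for a prime $p$ with $(B^{w\mathfrak{U}})_p\neq 1$, the $G$-core of $(B^{w\mathfrak{U}})_p$ inside the $G$-normal Sylow subgroup of $G^{\mathfrak{U}}$); passing to $G/N$ preserves the hypotheses ($A/N$ remains in $w\mathfrak{U}$ since $A\in w\mathfrak{U}$) and strictly reduces $|G|$, allowing the induction to close.
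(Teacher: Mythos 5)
Your proposal has a genuine gap: the induction does not close in the asymmetric case where $A\in w\mathfrak{U}$ but $B^{w\mathfrak{U}}\neq 1$, and you acknowledge as much. The plan you offer for that case --- locating a nontrivial $G$-invariant subgroup $N\leq B^{w\mathfrak{U}}$ by taking cores inside the $G$-normal Sylow subgroups of $G^{\mathfrak{U}}$ --- cannot be expected to work: there is no reason for $Core_G(B^{w\mathfrak{U}})$ to be nontrivial (indeed, in the proof of Theorem~D the authors establish precisely that $Core_G(B^{\mathfrak{U}})=1$ in a minimal counterexample, so the hard configurations are exactly those in which no such $N$ exists). So the reduction you propose is not a proof of the remaining case, and the remaining case is the whole difficulty.

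The idea you are missing is that the hypothesis ``$G'$ nilpotent'' collapses the two residuals. Since $\mathfrak{U}\subseteq w\mathfrak{U}$ one has $G^{w\mathfrak{U}}\leq G^{\mathfrak{U}}\leq G'$, so $G/G^{w\mathfrak{U}}$ is a metanilpotent $w$-supersoluble group; by \cite[Theorem~2.11]{VVT10} such a group is supersoluble, whence $G^{\mathfrak{U}}\leq G^{w\mathfrak{U}}$ and therefore $G^{\mathfrak{U}}=G^{w\mathfrak{U}}$. The same argument applies to $A$ and $B$ because $A',B'\leq G'$ are nilpotent. With these identifications, Corollary~C is literally Theorem~D restated: $G^{w\mathfrak{U}}=G^{\mathfrak{U}}=A^{\mathfrak{U}}B^{\mathfrak{U}}=A^{w\mathfrak{U}}B^{w\mathfrak{U}}$. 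No induction, no case analysis, and no appeal to Corollary~B is needed. (Your easy inclusion $A^{w\mathfrak{U}}B^{w\mathfrak{U}}\leq G^{w\mathfrak{U}}$ and your observation that a soluble $B$ equals the product of the Sylow subgroups in a Sylow basis, so that Sylow-wise permutability yields $BM=MB$, are both correct but end up being unnecessary for this corollary.)
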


Denote by $ \mathfrak{N} $ the class of all nilpotent groups. A nice result of Monakhov \cite[Theorem 1]{M18} states that if $ G=AB $ is the mutually permutable product of the supersoluble subgroups $ A $ and $ B $, then $ G^{\mathfrak{U}}=(G')^{\mathfrak{N}}=[A, B]^{\mathfrak{N}} $. We prove an analogue of this result for weak normal products.

\begin{CorD}\label{U-residual}
Let $G=AB$ be a weak normal product of the supersoluble subgroups $A$ and $B$. If $A$ is normal in $G$, we have that  $G^{\mathfrak{U}}=(G')^{\mathfrak{N}}=[A,B]^{\mathfrak{N}}$.
\end{CorD}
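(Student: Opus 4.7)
The plan is to establish $G^{\mathfrak{U}}=(G')^{\mathfrak{N}}$ first and then the equality $(G')^{\mathfrak{N}}=[A,B]^{\mathfrak{N}}$, using Theorem~C as the main tool in both instances.

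For the first equality, the inclusion $(G')^{\mathfrak{N}}\le G^{\mathfrak{U}}$ is standard: $G/G^{\mathfrak{U}}\in\mathfrak{U}$ forces its derived subgroup $G'/(G'\cap G^{\mathfrak{U}})$ to be nilpotent, so $(G')^{\mathfrak{N}}\le G'\cap G^{\mathfrak{U}}$. For the reverse inclusion I apply Theorem~C to the quotient $G/(G')^{\mathfrak{N}}$. This quotient inherits the weak normal product structure, since the image of $A$ is normal and the image of $B$ permutes with every maximal subgroup of every Sylow subgroup of the image of $A$ (such maximal subgroups lift to maximal subgroups of the Sylow subgroups of $A$); its factors are supersoluble, and its derived subgroup $G'/(G')^{\mathfrak{N}}$ is nilpotent by definition. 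Hence $G/(G')^{\mathfrak{N}}\in\mathfrak{U}$, giving $G^{\mathfrak{U}}\le(G')^{\mathfrak{N}}$.

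For $(G')^{\mathfrak{N}}=[A,B]^{\mathfrak{N}}$ the inclusion $[A,B]^{\mathfrak{N}}\le(G')^{\mathfrak{N}}$ is immediate: $[A,B]\le G'$ together with the nilpotency of $G'/(G')^{\mathfrak{N}}$ forces $[A,B]/([A,B]\cap(G')^{\mathfrak{N}})$ to be nilpotent. The reverse inclusion is obtained by the analogous strategy, applying Theorem~C to $G/[A,B]^{\mathfrak{N}}$ to yield $G^{\mathfrak{U}}\le[A,B]^{\mathfrak{N}}$, combined with the first equality. Two facts must be checked. First, $[A,B]^{\mathfrak{N}}$ is normal in $G$: since $A\trianglelefteq G$ forces $[A,B]\le A$, iterated commutators satisfy $\gamma_k([A,B])\le\gamma_k(A)\le A'$ for $k\ge 2$, so $[A,B]^{\mathfrak{N}}\le A'$; being characteristic in $[A,B]$, the subgroup $[A,B]^{\mathfrak{N}}$ is normalized by $B$, and normalization by $A$ is handled using that $A'[A,B]\trianglelefteq G$ (whose normality follows because its image in the abelian quotient $A/A'$ is the $B$-invariant subgroup $[A/A',B]$) together with $[A,B]^{\mathfrak{N}}\le A'$. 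The main obstacle is the second point: showing $G'/[A,B]^{\mathfrak{N}}$ is nilpotent. Writing $G'=A'[A,B]B'$, each of the three pieces is nilpotent modulo $[A,B]^{\mathfrak{N}}$, but verifying nilpotency of their product requires a careful structural analysis of how $B'$ acts on $A'[A,B]$ by conjugation, exploiting the supersolubility of both factors and the weak normal product structure, likely via induction on $|G|$ reducing to a minimal counterexample.
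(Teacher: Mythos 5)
Your first equality is correct and is essentially the paper's own route: apply Theorem~C to $G/(G')^{\mathfrak{N}}$, whose derived subgroup is nilpotent by construction and which inherits the weak normal product structure, to get $G^{\mathfrak{U}}\le (G')^{\mathfrak{N}}$; the reverse inclusion is standard. (A minor remark: $[A,B]$ is normalized by both $A$ and $B$ for purely formal reasons, so $[A,B]\trianglelefteq G$ and the normality of $[A,B]^{\mathfrak{N}}$ in $G$ is immediate; none of the computation you give for this is needed.)

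The second equality, however, contains a genuine gap, and you have located it yourself: everything reduces to showing that $G'/[A,B]^{\mathfrak{N}}$ is nilpotent, i.e.\ $(G')^{\mathfrak{N}}\le[A,B]^{\mathfrak{N}}$, and at exactly that point your argument stops with ``requires a careful structural analysis\dots likely via induction.'' Note moreover that invoking Theorem~C here is circular: once $G^{\mathfrak{U}}=(G')^{\mathfrak{N}}$ is known, the hypothesis ``$G'/[A,B]^{\mathfrak{N}}$ is nilpotent'' that you would need in order to apply Theorem~C to $G/[A,B]^{\mathfrak{N}}$ is already equivalent to the conclusion you are after, so Theorem~C buys nothing for this half. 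The paper fills the gap with a separate and non-routine ingredient, Proposition~\ref{subnormalderived}: $B'$ is subnormal in $G$. Its proof runs a minimal counterexample argument reducing to the case $G=A_pB$ with $A=A_p$ a Sylow subgroup, and then invokes the theorem of \cite{BBPA18} on products $HK$ in which each factor permutes with the maximal subgroups of the other and $K$ is nilpotent and $\delta$-permutable. Granting that, $A'$ and $B'$ are nilpotent and subnormal, so $(A')^{G}(B')^{G}$ is a normal nilpotent subgroup of $G$ by Fitting's theorem, and from the decomposition $G'=(A')^{G}(B')^{G}[A,B]$ together with \cite[II, Lemma~II.2.12]{DH92} one obtains $(G')^{\mathfrak{N}}=[A,B]^{\mathfrak{N}}$ directly. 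Without an argument for the subnormality of $B'$ (or some substitute for it), your proposal does not establish the second equality.
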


\section{Preliminary Results}
It is easy to see that factor groups of weak normal products are also weak normal products. For weak direct products we have the following:

\begin{lemma}\label{factor}
Let $ G =[A]B $ be a weak direct product of $ A $ and $ B $.
\begin{itemize}
\item[(a)] If  $ N $ is a normal subgroup of $ G $ such that $ N\leqslant A $ or $ N\leqslant B $, then $ G/N=[AN/N](BN/N) $ is a weak direct product of $AN/N$ and $BN/N$.
\item[(b)] If $ K $ is a subgroup of $ B $, then $ [A]K $ is a weak direct product of $ A $ and $ K $.
\end{itemize}
\end{lemma}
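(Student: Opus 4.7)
The plan is to verify, in each case, the three defining properties of a weak direct product: trivial intersection of the factors, normality of the first factor in the product, and the permutability condition linking the second factor with the maximal subgroups of Sylow subgroups of the first.

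For part (a), normality of $AN/N$ in $G/N$ is immediate from $A \trianglelefteq G$. For the intersection, one checks directly: if $aN=bN$ with $a\in A$, $b\in B$, then $a^{-1}b\in N$; whether $N\leqslant A$ or $N\leqslant B$, this forces $a^{-1}b\in A\cap B=1$, so $aN=bN=N$. The permutability condition splits naturally by which factor contains $N$. If $N\leqslant A$, a Sylow $p$-subgroup of $\overline{A}=A/N$ has the form $P_0N/N$ for some Sylow $p$-subgroup $P_0$ of $A$, and any maximal subgroup $M/N$ of $P_0N/N$ can be written as $M_0N/N$ where $M_0=M\cap P_0$; a short index calculation (using Dedekind's identity to write $M=(M\cap P_0)N$) shows that $M_0$ is a maximal subgroup of $P_0$. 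Then
\[
(BN/N)(M/N) \;=\; B M_0 N/N,
\]
and $BM_0N$ is a subgroup of $G$ because $BM_0$ is a subgroup (by the weak direct product hypothesis applied to $G$) and $N$ is normal in $G$. If instead $N\leqslant B$, then $AN/N\cong A$ and the Sylow and maximal subgroup structure lifts in a one-to-one fashion to that of $A$, so the same permutability transfers directly.

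For part (b), the key observation is that $B$ in fact \emph{normalises} every maximal subgroup $M$ of every Sylow subgroup of $A$. Indeed, given $b\in B$ and $m\in M$, the element $b^{-1}mb$ lies in $A$ because $A\trianglelefteq G$, and it also lies in $BM=MB$ by the weak direct product hypothesis; writing $b^{-1}mb=m'b'$ with $m'\in M$, $b'\in B$, one finds $b'=(m')^{-1}(b^{-1}mb)\in A\cap B=1$, so $b^{-1}mb=m'\in M$. Restricting this normalisation statement to any subgroup $K\leqslant B$ gives $KM=MK$ for every such $M$, while $A\cap K\leqslant A\cap B=1$ and $A\trianglelefteq AK$ are clear. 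Hence $[A]K$ is a weak direct product of $A$ and $K$.

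The main obstacle is the bookkeeping in case $N\leqslant A$ of part (a): one must argue that an arbitrary maximal subgroup of a Sylow subgroup of $A/N$ comes from a maximal subgroup of a Sylow subgroup of $A$, which requires a careful use of Dedekind's identity together with the fact that Sylow $p$-subgroups of $A/N$ are of the form $P_0N/N$. Once this identification is made the conclusion follows painlessly from the normality of $N$.
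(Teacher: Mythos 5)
Your proof is correct and follows essentially the same route as the paper's: in (a) you use Dedekind's identity to identify a maximal subgroup of a Sylow subgroup of $AN/N$ with $M_0N/N$ for $M_0$ maximal in a Sylow subgroup of $A$, and then invoke normality of $N$ to see that $BM_0N$ is a subgroup; in (b) you show, exactly as the paper does (though element-wise rather than via $M=A\cap MB$), that $B$ actually normalises each such maximal subgroup because $A\trianglelefteq G$ and $A\cap B=1$. One small slip: in your verification that $AN/N\cap BN/N$ is trivial, the assertion that $a^{-1}b\in A\cap B$ is not what follows (take $a\in N\leqslant A$, $b=1$); rather, $a^{-1}b\in N\leqslant A$ forces $b\in A\cap B=1$ and hence $a\in N$ (and symmetrically when $N\leqslant B$), which still gives $aN=bN=N$, so the conclusion stands.
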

\begin{proof} (a) Let $H/N$ be a Sylow $p$-subgroup of $AN/N$. Then $H/N=PN/N$, where $P$ is a Sylow $p$-subgroup of $A$. Let $K/N$ be a maximal subgroup of $H/N$. Then $K=K\cap PN=N(P\cap K)$ and $K/N=N(P\cap K)/N$. Thus $$p=|PN/N:(P\cap K)N/N|=\frac{|P||N|}{|P\cap N|} \frac{|P\cap K\cap N|}{|P\cap K| |N|}=|P: P\cap K|.$$  Hence $P\cap K$ is a maximal subgroup of $P$. Then $B$ permutes with $P\cap K$ and so $BN/N$ permutes with $K/N$. Therefore $ G/N=[AN/N](BN/N) $ is a weak direct product of $AN/N$ and $BN/N$.

(b) Let $K$ be any proper subgroup of $B$ and $H$ be any maximal subgroup of a Sylow subgroup of $A$. By the hypotheses, we have $HB = BH$ and so $ H = H(A \cap B) = A \cap HB $. Since $ A  $ is normal in $G$, it follows that $H$ is normal in $HB $ and so $ B$  normalizes $H$. Hence $ K $ permutes with $ H $. Therefore $ [A]K $ is a weak direct product of $ A $ and $ K $.
\end{proof}

Our second lemma contains some of the properties of $\mathbb{P}$-subnormal subgroups.
\begin{lemma}\label{subnormal}\cite[Lemma 1.4]{VVT10}
Let $G$ be a soluble group and $H$ and $K$ two  subgroups of $G$. The following properties hold:
\begin{itemize}
\item[(i)] If $H$ is $\mathbb{P}$-subnormal in $G$ and $N$ is normal in $G$ then $HN/N$ is $\mathbb{P}$-subnormal $G/N$.
\item[(ii)] If $N$ is normal in $G$ and $HN/N$ is $\mathbb{P}$-subnormal in $G/N$ then $HN$ is $\mathbb{P}$-subnormal in $G$.
\item[(iii)] If $H$ is $\mathbb{P}$-subnormal in $K$ and $K$ is $\mathbb{P}$-subnormal in $G$ then $H$ is $\mathbb{P}$-subnormal in $G$.
\end{itemize}
\end{lemma}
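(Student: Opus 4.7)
The plan is to work directly from the definition of $\mathbb{P}$-subnormality: a subgroup $H$ is $\mathbb{P}$-subnormal in $G$ exactly when there is a chain $H = H_0 \leqslant H_1 \leqslant \cdots \leqslant H_n = G$ whose successive indices $|H_i : H_{i-1}|$ are all primes. Each of the three parts will reduce to producing such a chain in the relevant (sub)group, using nothing more than the correspondence theorem and the usual Dedekind/product identities.

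For (i), I would start with such a chain for $H$ in $G$ and push it through the quotient by forming $H_iN/N$, obtaining $HN/N = H_0N/N \leqslant H_1N/N \leqslant \cdots \leqslant H_nN/N = G/N$. The key computation is the identity
\[
|H_iN/N : H_{i-1}N/N| = |H_iN : H_{i-1}N| = |H_i : H_i \cap H_{i-1}N|,
\]
where the last equality comes from $H_iN = H_{i-1}N\cdot H_i$ together with Dedekind's modular law. Because $H_{i-1} \leqslant H_i \cap H_{i-1}N$, this index divides the prime $|H_i : H_{i-1}|$, so it is $1$ or prime. Collapsing any equal consecutive terms then yields a valid $\mathbb{P}$-subnormal chain for $HN/N$ in $G/N$.

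For (ii), a chain witnessing $\mathbb{P}$-subnormality of $HN/N$ in $G/N$ lifts through the correspondence theorem to a chain from $HN$ to $G$ in $G$ with the same (prime) indices, which is exactly what is required. For (iii), I would simply concatenate a $\mathbb{P}$-subnormal chain from $H$ to $K$ inside $K$ with one from $K$ to $G$ inside $G$; the resulting chain from $H$ to $G$ has all successive indices prime, so $H$ is $\mathbb{P}$-subnormal in $G$.

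I do not expect any real obstacle: the only computational ingredient is the elementary index identity used in (i), and solubility enters only to ensure the notion of $\mathbb{P}$-subnormality is well-behaved in the ambient group; the remaining two parts are pure bookkeeping with chains.
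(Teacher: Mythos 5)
The paper does not prove this lemma at all: it is imported verbatim from \cite[Lemma 1.4]{VVT10} and used as a black box, so there is no in-paper argument to compare yours against. Your direct verification from the definition is correct and complete. In (i) the index computation $|H_iN/N : H_{i-1}N/N| = |H_iN : H_{i-1}N| = |H_i : H_i \cap H_{i-1}N|$ is just the product formula for $|H_i(H_{i-1}N)|$ (Dedekind is not really needed), and since $H_{i-1} \leqslant H_i \cap H_{i-1}N$ the index divides the prime $|H_i : H_{i-1}|$, so after deleting repetitions you get a genuine $\mathbb{P}$-subnormal chain (or $HN/N = G/N$, which is covered by the first clause of the definition). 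Parts (ii) and (iii) are, as you say, the correspondence theorem and concatenation of chains, with the degenerate cases $HN = G$, $H = K$, $K = G$ absorbed by the ``$H = G$'' clause. One small observation: nothing in your argument actually uses solubility; the hypothesis is carried along from the source, where it matters for other assertions about $\mathbb{P}$-subnormality (e.g.\ existence of such chains for Sylow subgroups), not for these three closure properties.
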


\section{Supersoluble and w-supersoluble residuals }
We start this section by proving Theorem A.

\begin{proof}[\textbf{Proof of Theorem A}]
Assume that the result is false and let $G$ be a counterexample of minimal order. Clearly, $G$ is soluble and $A$ and $B$ are proper subgroups of $G$. Let $N$ be a minimal normal subgroup of $G$ contained in $A$, then applying Lemma~\ref{factor}(a), $G/N=[A/N] (BN/N)$ is a weak direct product of $A/N$ and $BN/N$.
By the minimality of $G$, $G/N \in \mathfrak{U}$. Since the class of all supersoluble groups is a saturated formation, there exists a unique minimal normal subgroup $N$ of $G$ contained in $A$, $N$ a $p$-group for some prime $p$, $|N| > p$, and $\Phi(A) = 1$. Since $A$ is supersoluble, $A$ has a normal Sylow subgroup and since $N$ is the unique minimal normal subgroup of $G$ contained in $A$, it follows that $\textbf{F}(A)$ is a $p$-group and $\textbf{F}(A)$ is an elementary abelian Sylow $p$-subgroup of $A$.

Assume that $A$ is not a $p$-group. Then $\textbf{F}(A)$ is a completely reducible $A$-module and so $ \textbf{F}(A) = N \times Z$, for some $A$-module $Z$. Let $L$ be a minimal normal subgroup of $A$ contained in $N$. Then $N = L \times D$, for some $A$-module $D$. Then $\textbf{F}(A) = L \times DZ$, and $ DZ $ is a maximal subgroup of $\textbf{F}(A)$ because $L$ is of prime order. Therefore $ E = DZ$ permutes with $B$. Hence $ DZ=A\cap (DZ)B $ and so $ DZ $ is normalised by $B$. Since $DZ$ is also normalised by $A$, it follows that $DZ$ is a normal subgroup of $G$. The minimality of $N$ forces $D = 1$ and so $N$ is of prime order, which is a contradiction. Consequently, $A$ is an elementary abelian $p$-group. Note that $A$ cannot be cyclic since $|N| > p$. Let $1 \neq X$ be a maximal subgroup of $A$. Arguing as above, we have that $X$ is normal in $XB$ so that $X$ is normalised by $B$. Hence $X$ is normal in $G$ because $A$ is abelian. Therefore $N$ is contained in $X$ and so $N \leq \Phi(A)= 1$, our final contradiction.
\end{proof}

\begin{proof}[\textbf{Proof of Corollary A}]
Assume, by way of contradiction, that the result fails, and let $G$ be a counterexample of least order. Clearly $G$ is soluble and $A$ and $B$ are proper subgroups of $G$. Since the class of all w-supersoluble groups is a saturated formation, we can argue as in Theorem~\ref{A} to conclude that there exists a unique  minimal normal subgroup $N$ of $G$ contained in $A$, $N$, and $N$ is a $p$-group for some prime $p$. Moreover, $\Phi(A) = 1$, and $A_p = \textbf{F}(A)$ is the Sylow $p$-subgroup of $A$.  By the minimality of $G$, $G/N$ is $w$-supersoluble. Let $P$ be a Sylow $p$-subgroup of $G$. Then $P/N$ is $\mathbb{P}$-subnormal in $G/N$. By Lemma \ref{subnormal}(ii), $P$ is $\mathbb{P}$-subnormal in $G$. Suppose that for every prime $q \neq p$ dividing $|G|$ and every Sylow $q$-subgroup $B_{q}$ of $B$ we have that $AB_{q}$ is a proper subgroup of $G$. Let $A_{q}$ be a Sylow $q$-subgroup of $A$ such that $G_{q}=A_{q}B_{q}$ is a Sylow $q$-subgroup of $G$. Since $G/N$ is $w$-supersoluble, it follows that $G_{q}N$ is $\mathbb{P}$-subnormal in $G$. By Lemma \ref{factor}(b), $AB_{q}$ satisfies the hypotheses of the theorem. Hence $AB_{q}$ is $w$-supersoluble by the choice of $G$. Thus $G_{q}N \leq AB_{q}$ is $w$-supersoluble. Consequently, $G_{q}$ is $\mathbb{P}$-subnormal in $G_{q}N$ which is $\mathbb{P}$-subnormal in $G$. Applying Lemma \ref{subnormal}(iii), $G_{q}$ is $\mathbb{P}$-subnormal in $G$. Therefore the Sylow subgroups of $G$ are $\mathbb{P}$-subnormal in $G$ and so $G$ is $w$-supersoluble, a contradiction. Thus we may assume there exists $q \neq p$ such that $G=AB_{q}$.  Let $T=A_{p}G_{q}=(A_{p}A_{q})B_{q}$. Since $A$ is normal in $G$, we have that $A_{q}$ is normal in $G_{q}$ and then $A_{p}A_{q}$ is normalized by $B_{q}$. Moreover, $A_{p}A_{q}$ is a $w$-supersoluble metanilpotent subgroup of $G$. By \cite[Theorem 2.13(1)]{VVT10}, $A_{p}A_{q}$ is supersoluble. It is clear that $T$ is a weak direct product of the supersoluble subgroups $A_{p}A_{q}$ and $B_{q}$. Applying Theorem~\ref{A}, it follows that $T$ is supersoluble. Therefore $T$ is $w$-supersoluble. But $G_{q}N \leq T$ which is $w$-supersoluble. Thus $G_{q}$ is $\mathbb{P}$-subnormal in $G_{q}N$ which is $\mathbb{P}$-subnormal in $G$. Again the application of Lemma \ref{subnormal}(iii) yields  $G_{q}$ is $\mathbb{P}$-subnormal in $G$. If $G_{r}$ is a Sylow $r$-subgroup of $G$ for some prime $r \neq p, q$, then $G_{r}$ is contained in $A$ and so $G_{r}$ is $\mathbb{P}$-subnormal in $A$. Since $A$ is also $\mathbb{P}$-subnormal in $G$, we have that $G_{r}$ is $\mathbb{P}$-subnormal in $G$. Consequently, every Sylow subgroup of $G$ is $\mathbb{P}$-subnormal in $G$ and $G$ is w-supersoluble. This final contradiction completes the proof of the corollary.

\end{proof}

\begin{proof}[\textbf{Proof of Theorem B}]
Suppose that the result is not true and let $ G $ be a minimal counterexample. Then\\ \\
\textit{(i) $ A \in \mathfrak{F} $, $ B^{\mathfrak{F}}\not= 1 $, $ Core_{G}(B)=1 $ and $ G^{\mathfrak{F}}=B^{\mathfrak{F}}N $ for every minimal normal subgroup $ N $ of $ G $ such that $ N\leqslant A $.}

 Let $ N $ be a minimal normal subgroup of $ G $ such that $ N\leqslant A $ or $ N\leqslant B $. Then $ G/N=[AN/N](BN/N) $ is a weak direct product of $AN/N$ and $BN/N$ by Lemma~\ref{factor}(a). The minimal choice of $G$ implies that $ G^{\mathfrak{F}}N/N=(A^{\mathfrak{F}}N/N) (B^{\mathfrak{F}}N/N) $, that is, $ G^{\mathfrak{F}}N=A^{\mathfrak{F}}B^{\mathfrak{F}}N $. Since $ G/G^{\mathfrak{F}}\in \mathfrak{F} $, $ AG^{\mathfrak{F}}/G^{\mathfrak{F}} $ and $ BG^{\mathfrak{F}}/G^{\mathfrak{F}} $ also belong to $ \mathfrak{F} $ and then $ A^{\mathfrak{F}}\leqslant G^{\mathfrak{F}} $ and $ B^{\mathfrak{F}}\leqslant G^{\mathfrak{F}} $. If $ G^{\mathfrak{F}}\cap N= 1 $, then $ G^{\mathfrak{F}}=A^{\mathfrak{F}}B^{\mathfrak{F}}(G^{\mathfrak{F}}\cap N)=A^{\mathfrak{F}}B^{\mathfrak{F}} $, a contradiction. Hence $ G^{\mathfrak{F}}=A^{\mathfrak{F}}B^{\mathfrak{F}} N $ for every minimal normal subgroup $ N $ of $ G $ such that $ N\leqslant A $ or $ N\leqslant B $. If $A^{\mathfrak{F}} \neq 1$, then there exists a minimal normal subgroup $N$ of $G$ contained in $A^{\mathfrak{F}}$ because $A^{\mathfrak{F}}$ is normal in $G$. This contradiction yields $A \in \mathfrak{F}$ and $ G^{\mathfrak{F}}=B^{\mathfrak{F}}N $ for every minimal normal subgroup $ N $ of $ G $ such that $ N\leqslant A $ or $ N\leqslant B $.  If $ B \in \mathfrak{F} $, then $ G\in \mathfrak{F} $ by Theorem~\ref{A} and Corollary~A, contrary to assumption.  Hence $ B^{\mathfrak{F}} \neq 1 $.
 Suppose that $Core_{G}(B) \neq 1$. Let $N$ be a minimal normal subgroup of $G$ contained in $B$ and let $R$ be a minimal normal subgroup of $G$ contained in $A$. Then $G^{\mathfrak{F}}=B^{\mathfrak{F}}N \cap B^{\mathfrak{F}}R \leq B \cap B^{\mathfrak{F}}R=B^{\mathfrak{F}}(B \cap R)=B^{\mathfrak{F}}$, a contradiction. Consequently we have that $Core_{G}(B)=1$. \\ \\

\textit{(ii) $ \mathbf{F}(A)$ is a Sylow $p$-subgroup of $A$, where $ p $ is the largest prime dividing $ |A| $ .}

Since  $ A \in \mathfrak{F} $, it follows that $A$ is a Sylow tower group of supersoluble type. In particular, $ 1 \neq \mathbf{O}_{p}(A)$ is the Sylow $p$-subgroup of $A$, where $ p $ is the largest prime dividing $ |A| $.  If $ \mathbf{F}(A) $ is not a $ p $-group, then $1 \neq  \mathbf{O}_{q}(A)\leqslant \mathbf{O}_{q}(G)$. Let $ N_{1} $ be a minimal normal subgroup of $ G $ contained in $\mathbf{O}_{p}(A) $ and let $ N_{2} $ be a minimal normal subgroup of $ G $ contained in $\mathbf{O}_{q}(A) $. Then $ G^{\mathfrak{F}}=B^{\mathfrak{F}}N_{1}=B^{\mathfrak{F}}N_{2}$, which is a contradiction since $ B^{\mathfrak{F}}\cap N_{1}=B^{\mathfrak{F}}\cap N_{2}=1 $. Therefore $ \mathbf{F}(A) = \mathbf{O}_{p}(A)$ is the Sylow $p$-subgroup of $A$.\\ \\

\textit{(iii) $ G $ is soluble, $ AK $ belongs to $ \mathfrak{F} $ for every proper subgroup $ K $ of $ B $; in particular, $B$ is a minimal non-supersoluble group and $ B^{\mathfrak{F}} $ is a $ q $-subgroup of $ B $ for some prime $ q $.}

Suppose that $ K $ is a proper subgroup of $ B $. By Lemma \ref{factor}, $AK$ satisfies the hypotheses of the theorem and so $ (AK)^{\mathfrak{F}}=K^{\mathfrak{F}} $ by the minimal choice of $G$. Since $ (AK^{x})^{\mathfrak{F}}=(K^{x})^{\mathfrak {F}}=(K^{\mathfrak{F}})^{x} $ for any $ x\in B $, it follows that $ A $ normalizes $ (K^{\mathfrak{F}})^{x} $. Thus $ A $ normalizes $ \langle (K^{\mathfrak{F}})^{x} \mid x\in B \rangle $. Then $ \langle (K^{\mathfrak{F}})^{x} \mid x\in B \rangle \lhd G $, contrary to $ Core_{G}(B)=1 $. Hence $ (K^{\mathfrak{F}})^{x}=1 $. Consequently, $ AK $ belongs to $ \mathfrak{F} $. This shows that $ B $ is $ \mathfrak{F} $-critical and by \cite[Theorem 2.9]{VVT10}, we have that $ B $ is a minimal non-supersoluble group. By \cite[Theorem~10]{BBER07}, we have that $ B^{\mathfrak{F}} $ is a  $ q $-group for some prime $ q $. In particular, $B$ and then $G$ are soluble.\\ \\

\textit{(iv) $ G^{\mathfrak{F}}=B^{\mathfrak{F}}\times N $ is an elementary abelian $ p $-group.}

Applying (iii), it follows that $B^{\mathfrak{F}}$ is a $q$-group for some prime $q$. Let $N$ be a minimal normal subgroup of $G$ contained in $A$. Then $G^{\mathfrak{F}}=B^{\mathfrak{F}}N$ by (i), and $N$ is a $p$-group by (ii).

Suppose that $B^{\mathfrak{F}}$ is a normal subgroup of $G^{\mathfrak{F}}$. Then $G^{\mathfrak{F}}/B^{\mathfrak{F}}$ is an elementary abelian $p$-group. Consequently, the residual $X$ of $G^{\mathfrak{F}}$ associated to the formation of all elementary abelian $p$-groups is a normal subgroup of $G$ contained in $B$. Hence $X \leq Core_G(B) = 1$ and $G^{\mathfrak{F}}$ is an elementary abelian $p$-group.

Assume that $p \neq q$. Let $N$ be a minimal normal subgroup of $G$ contained in $A$. Then $G^{\mathfrak{F}}=B^{\mathfrak{F}}N$ and $N$ is a $p$-group by (ii). Hence $B^{\mathfrak{F}}$ is a Sylow $q$-subgroup of  $ G^{\mathfrak{F}}=B^{\mathfrak{F}}N $. Applying Frattini's argument, we have that $G=G^{\mathfrak{F}}N_{G}(B^{\mathfrak{F}})=NN_{G}(B^{\mathfrak{F}})$. Since $Core_{G}(B)=1$, it follows that $N_{G}(B^{\mathfrak{F}})$ is a proper subgroup of $G$. Hence $N$ is not contained in $\Phi(G)$ for each minimal normal subgroup $N$ of $G$ contained in $A$. If $\Phi(A) \neq 1$, a minimal normal subgroup of $G$ must be contained in $\Phi(A) \leq \Phi(G)$, a contradiction.  Therefore $\Phi(A)=1$. Let $N$ be a minimal normal subgroup of $G$ contained in $A$. Then $N=N_{1} \times N_{2} \times \cdots \times N_{r}$ is a direct product of minimal normal subgroups of $A$, and there exists $i \in \{1,2, \dots, r \}$ such that $N_{i}$ is not contained in $ \Phi(A)$. Suppose $i=1$. Let $M$ be a maximal subgroup of $A$ such that $A=N_{1}M$ and $N_{1} \cap M=1$. Assume first that $A$ is a $p$-group. Then $BM$ is a subgroup of $G$, and $M=BM \cap A$ is a normal subgroup of $BM$. Hence $M$ is normalized by $B$ and so $M$ is a normal subgroup of $G$. Now $N=N_{1}(M \cap N)$. But $M \cap N$ is normal in $G$. The minimality of $N$ yields $N=N_{1}$ and then $|N|=p$. Thus $G/C_{G}(N)$ is abelian. Hence $G^{\mathfrak{F}}$ centralises $N$, and $B^{\mathfrak{F}}$ is a normal subgroup in $G^{\mathfrak{F}}$ and so $G^{\mathfrak{F}}$ is an elementary abelian $p$-group. This contradiction implies that $A$ is not a $p$-group. Then $T= \mathbf{F}(A)B$ is a proper subgroup of $G$ which is a weak direct product of $ \mathbf{F}(A)$ and $B$. By the minimality of $G$, $T^{\mathfrak{F}}=B^{\mathfrak{F}}$. Then  $B^{\mathfrak{F}}$ is a normal subgroup of $G^{\mathfrak{F}}$ and so $G^{\mathfrak{F}}$ is an elementary abelian $p$-group, a contradiction which shows that $p = q$. Then $B^{\mathfrak{F}}$ is a subnormal subgroup of $G$. By \cite[Lemma~A.14.3]{DH92}, $N$ normalises $B^{\mathfrak{F}}$ and therefore  $B^{\mathfrak{F}}$ is a normal subgroup of the elementary abelian $p$-group $G^{\mathfrak{F}}$.\\ \\

\textit{(v) Final contradiction.}
By \cite[IV, 5.18]{DH92}, since $ B^{\mathfrak{F}} $ is abelian, there exists an $ \mathfrak{F} $-projector $ K $ of $ B $ such that $ B=B^{\mathfrak{F}}K $ and $ K\cap B^{\mathfrak{F}}=1 $. Consider the subgroup $Z=AK$ of $G$. Applying (iii), $Z$ belongs to $ \mathfrak{F} $ and $G=B^{\mathfrak{F}}Z=F(G)Z$. By \cite[III, 3.23(b)]{DH92}, there exists a unique $\mathfrak{F}$-projector of $G$ containing $Z$, $E$ say. Hence $G=B^{\mathfrak{F}}Z=G^{\mathfrak{F}}E$ and $G^{\mathfrak{F}} \cap E=1$ by (iii) and \cite[IV, 5.18]{DH92}. In particular,  $B^{\mathfrak{F}} \cap Z=1$. Now $|Z||B^{\mathfrak{F}}|=|E| |G^{\mathfrak{F}} |=|E||B^{\mathfrak{F}}| |N |$. Hence $|Z |=|E| |N| $. This implies $Z=E$ and then $B^{\mathfrak{F}}=G^{\mathfrak{F}}$, a contradiction.
\end{proof}

\begin{proof}[\textbf{Proof of Theorem C}]
Assume that the result is false and let $G$ be a minimal counterexample. Then every proper epimorphic image of $G$ is supersoluble, and hence $G$ has exactly one minimal normal subgroup $N$ which is not contained in the Frattini subgroup of $G$. Since $G$ is soluble, it follows that $N$ is abelian, $N=C_{G}(N)=F(G)$ and there exists a core-free maximal subgroup of $G$ such that $G=NM$ and $N \cap M = 1$. Let $p$ be the prime dividing $|N|$. Then $|N| > p$. Since $1 \neq G'$ is nilpotent, we have that $G'=N$ and $M$ is abelian. But $\mathbf{O}_{p}(M) = 1$ by \cite[Lemma~A.13.6]{DH92}. Hence $M$ is a $p'$-group and $N$ is the Sylow $p$-subgroup of $G$. Since $B \neq G$, we have that $N \leq A$. Note that $N = C_{A}(N) = \mathbf{O}_{p'p}(A)$. Therefore $A/\mathbf{O}_{p'p}(A)=A/\mathbf{O}_{p}(A)= A/N$ is abelian of exponent dividing $p-1$ because $A$ is supersoluble. Assume $BN$ is a proper subgroup of $G$. Then, by the minimality of $G$, $BN$ is supersoluble, and so $B_{p'} \cong BN/\mathbf{O}_{p'p}(BN)$ is abelian of exponent dividing $p-1$. Consequently $M$ is abelian of exponent dividing $p-1$. Since $N$ is an irreducible and faithful module for $M$, we have that $N$ has order $p$ by \cite[Theorem~B.9.8]{DH92}, a contradiction. Hence $G=BN$. Now $B \cap N$ is a normal subgroup of $G$ contained in $N$. Thus $B \cap N=1$, and $G=BN$ is the weak direct product of $B$ and $N$. By Theorem \ref{A}, $G$ is supersoluble. This contradiction proves the theorem.
\end{proof}

\begin{proof}[\textbf{Proof of Corollary B}]
Note that since $ G' $ is nilpotent, $ A $ and $ B $ are metanilpotent. By \cite[Theorem 2.11]{VVT10}, $ A $ and $ B $ are supersoluble. By Theorem~\ref{C}, $ G $ is supersoluble and hence $ G\in w \mathfrak{U} $.
\end{proof}

\begin{proof}[\textbf{Proof of Theorem D}]
Suppose the theorem is not true and let $ (G, A, B) $ be a counterexample with $ |G| + |A| + |B| $ as small as possible. Let $ N $ be a minimal normal subgroup of $ G $. It is easy to check that $ G/N $ satisfies the hypotheses of the theorem. By the minimality of $ G $, we have that $ G^{\mathfrak{U}}N=A^{\mathfrak{U}}B^{\mathfrak{U}}N $. Hence $ G^{\mathfrak{U}}=A^{\mathfrak{U}}B^{\mathfrak{U}}(G^{\mathfrak{U}}\cap N) $. Consequently, $ Soc(G) $ is contained in $ G^{\mathfrak{U}} $ and $ G^{\mathfrak{U}}=A^{\mathfrak{U}}B^{\mathfrak{U}}N $ for every minimal normal subgroup $ N $ of $ G $. Since $ G^{\mathfrak{U}}$ is contained in $G'$, we have that $ G^{\mathfrak{U}}$ is nilpotent.

Note that $A^{\mathfrak{U}}$ is a normal subgroup of $G$. If $ A^{\mathfrak{U}}\not= 1 $, then there exists a minimal normal subgroup $N$ of $G$ such that $N \leq  A^{\mathfrak{U}} $  and so $ G^{\mathfrak{U}}=A^{\mathfrak{U}}B^{\mathfrak{U}}N=A^{\mathfrak{U}}B^{\mathfrak{U}} $, a contradiction. Hence we may assume that $ A $ is supersoluble and that $ G^{\mathfrak{U}}=B^{\mathfrak{U}}N $ for every minimal normal subgroup $N$ of $G$. If $B$ were supersoluble, then $G$ would be supersoluble by Theorem~\ref{C}, which is a contradiction. Hence $ B^{\mathfrak{U}} \neq 1$. Furthermore, $ B^{\mathfrak{U}} $ cannot contain a normal subgroup of $ G $. Hence $ Core_{G}(B^{\mathfrak{U}})=1 $. Let $p$ be the largest prime dividing $ |A| $. Since $A$ is a Sylow tower group of supersoluble type, $A$ has a normal Sylow $p$-subgroup, $ A_{p} $ say, which is also normal in $G$. Hence $G$ has a minimal normal subgroup $ N $ of $ G $ which is a $ p $-group.  Since $G^{\mathfrak{U}}$ is nilpotent, we have that $B^{\mathfrak{U}}$ is a subnormal subgroup of $G$. By \cite[Lemma~A.14.3]{DH92}, $B^{\mathfrak{U}}$ is normalized by $N$. Thus $B^{\mathfrak{U}}$ is a normal subgroup of $G^{\mathfrak{U}}$ and $ G^{\mathfrak{U}}/B^{\mathfrak{U}} $ is an elementary abelian $ p $-group. Consequently $ B^{\mathfrak{U}} $ contains the residual $X$ of $ G^{\mathfrak{U}}$ associated to the formation of all elementary abelian $p$-groups. Since $X$ is a normal subgroup of $G$, it follows that $X \leq  Core_{G}(B^{\mathfrak{U}})=1$. Hence $ G^{\mathfrak{U}} $ is an elementary abelian $ p $-group.

Since $ Soc(G) $ is contained in $ G^{\mathfrak{U}} $, $ \textbf{O}_{p'}(G)=1 $ and hence $ \mathbf{F}(G)=\textbf{O}_{p}(G) $. Therefore $ G' \leq \mathbf{F}(G) $ is a $ p $-group and $ \mathbf{F}(G) $ is the unique  Sylow $ p $-subgroup of $ G $. Moreover the Hall $p'$-subgroups of $G$ are abelian (note that $G$ is soluble). Assume $ A_{p}B < G $. Then $ A_{p}B $ satisfies the hypotheses of the theorem. By the choice of $ G $, we have that $ (A_{p}B)^{\mathfrak{U}}=B^{\mathfrak{U}} $. Note that $G' \leq A_{p}B $. Hence $A_{p}B$ is a normal subgroup of $ G $. This implies that $ B^{\mathfrak{U}} $ is normal in $ G $, a contradiction. Hence $ G=A_{p}B$ and $A_{p}$ and $B$ satisfy the hypotheses of the theorem. If $A \neq A_{p}$, the choice of  $(G, A, B)$ implies that $ G^{\mathfrak{U}}=B^{\mathfrak{U}} $, a contradiction. Consequently we have that  $A=A_{p}$.


Write $T=AB_{p'}$.  By Theorem~\ref{C}, $T$ is supersoluble. Moreover, since $G=F(G)B_{p'}$, it follows that every minimal normal subgroup $N$ of $G$ contained in $T$ is a minimal normal subgroup of $T$. Thus $|N|=p$. Consequently, $N$ is ${\mathfrak{U}}$-central in $G$. By \cite[V, 3.2]{DH92}, $N$ is contained in every supersoluble normaliser of $G$. Let $E$ be one of them. Then $G=G^{\mathfrak{U}}E$ and $G^{\mathfrak{U}} \cap E=1$. However, $N \leq G^{\mathfrak{U}} \cap E=1$. This final contradiction proves the theorem.



\end{proof}

\begin{proof}[\textbf{Proof of Corollary C}]
Since $\mathfrak{U}\subseteq  w\mathfrak{U}$, we have $G^{w\mathfrak{U}} \leq G^{\mathfrak{U}} \leq G'$. Then $G/G^{w\mathfrak{U}}$ is a metanilpotent w-supersoluble group. Applying \cite[Theorem 2.11]{VVT10}, we have that $G/G^{w\mathfrak{U}}$ is supersoluble. Hence $G^{\mathfrak{U}} \leq G^{w \mathfrak{U}}$, and therefore $G^{\mathfrak{U}} = G^{w \mathfrak{U}}$ and the same is true for $A$ and $B$. Therefore, by Theorem~\ref{D}, $ G^{w \mathfrak{U}}=A^{w \mathfrak{U}}B^{w \mathfrak{U}} $, as desired.
\end{proof}

\section{An analogue of Monakhov's result}

The following two results are the key to prove Corollary~D.

\begin{lemma} \cite[Theorem A]{BBPA18}\\
Let the group $G=HK$ be the product of the subgroups $H$ and $K$. Assume that $H$ permutes with every maximal subgroup of $K$ and $K$ permutes with every maximal subgroup of $H$. If $H$ is supersoluble, $K$ is nilpotent and $K$ is $\delta$-permutable in $H$, where $\delta$ is a complete set of Sylow subgroups of $H$, then $G$ is supersoluble.
\end{lemma}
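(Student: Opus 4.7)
I would proceed by minimal counterexample. Let $(G,H,K)$ be a counterexample of smallest possible order. The first task is to check that the hypotheses are quotient-closed: if $N \trianglelefteq G$, then $G/N = (HN/N)(KN/N)$ is again a mutually permutable product (a standard property of mutually permutable products, see \cite{BBERA10}), $HN/N$ remains supersoluble, $KN/N$ remains nilpotent, and $\{P_i N/N : P_i \in \delta\}$ is a complete set of Sylow subgroups of $HN/N$ each of which permutes with $KN/N$. Thus every proper quotient of $G$ is supersoluble. Combined with the solubility of $G$ (a mutually permutable product of two soluble groups is soluble) and the fact that $\mathfrak{U}$ is a saturated formation, this forces $G$ to have a unique minimal normal subgroup $N$, with $N \not\leq \Phi(G)$, $N$ elementary abelian of exponent $p$ for some prime $p$, and $|N| > p$ (for otherwise $G$ would be supersoluble).

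Next I would localise $N$ inside a factor. By the structural results on mutually permutable products collected in \cite[Chapter~4]{BBERA10}, a minimal normal subgroup of a mutually permutable product lies in one of the factors, so either $N \leq K$ or $N \leq H$. If $N \leq K$, then the nilpotence of $K$ gives $[N,K] < N$, and the minimality of $N$ as a normal subgroup of $G$ forces $[N,K]=1$, so $K \leq \mathbf{C}_G(N)$. Consequently $G/\mathbf{C}_G(N)$ is an epimorphic image of $H/(H\cap \mathbf{C}_G(N))$, hence supersoluble, and it acts faithfully and irreducibly on the elementary abelian $p$-group $N$; by \cite[Theorem~B.9.8]{DH92} this yields $|N|=p$, a contradiction.

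It remains to handle the case $N \leq H$. Choose a minimal $H$-normal subgroup $L$ of $N$; since $H$ is supersoluble, $|L|=p$. As $L$ is already normal in $H$ and $G=HK$, it suffices to show that $K$ normalises $L$, for then $L \trianglelefteq G$, whence $L=N$ by minimality and $|N|=p$, contradicting $|N|>p$. To establish this, I would invoke the $\delta$-permutability: letting $P \in \delta$ be the Sylow $p$-subgroup of $H$, one has $L \leq P$ and $M := PK = KP$ is a subgroup of $G$. Since $K$ is nilpotent, $M$ is a mutually permutable product of the supersoluble $p$-group $P$ and the nilpotent group $K$, and inherits the hypotheses of the lemma. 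If $M < G$, minimality of $G$ gives $M$ supersoluble, so $L$ has a complement structure in $M$ whose chief factors of $M$ inside $L$ have order $p$; a short argument using the nilpotent action of $K$ on $P$ shows $K$ normalises $L$.

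The genuine obstacle is the residual case $G = PK$, where the reduction step above fails. Here I would exploit both the mutual permutability and the fact that $K$ is nilpotent, writing $K = K_p \times K_{p'}$ and arguing separately: $K_{p'}$ acts coprimely on $N$ and so preserves every $H$-invariant subgroup of $N$ provided one can show $K_{p'}$ normalises each such subgroup, while $K_p$ contributes to a Sylow $p$-subgroup of $G$ together with $P$, and mutual permutability with the maximal subgroups of $H$ restricts how $K_p$ can act on the chief factors of $H$ inside $N$. Making this coprime/$p$-part decomposition precise, and showing that $K$ normalises some $H$-chief factor of order $p$ in $N$, is the delicate technical step; once it is in place, the minimal counterexample collapses and the lemma follows.
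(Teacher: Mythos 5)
This lemma is not proved in the paper at all: it is quoted verbatim as Theorem~A of \cite{BBPA18}, so there is no internal argument to compare yours with; your attempt has to stand on its own. As it stands it does not, because the hole you acknowledge at the end sits exactly where the content of the theorem lies. A mutually permutable product of a supersoluble group and a nilpotent group need not be supersoluble; the extra hypothesis that $K$ permutes with a complete set $\delta$ of Sylow subgroups of $H$ is precisely what has to be exploited to close the argument, and the case $G=PK$ (or more generally the analysis of how $K$ acts on an $H$-chief factor of $N$) is the theorem, not a ``delicate technical step'' that can be deferred. Until that case is carried out, the minimal counterexample has not been refuted.

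Two further steps would fail as written. First, it is not true that a minimal normal subgroup of a mutually permutable product $G=HK$ must lie in one of the factors: the correct statement (see \cite[Chapter~4]{BBERA10}) is that $N\cap H$ and $N\cap K$ are normal in $G$, so each is $1$ or $N$, and the case $N\cap H=N\cap K=1$ is genuinely possible (one then knows that one of the factors centralises $N$, which is usable, but your trichotomy omits this case entirely). Second, in the subcase $M=PK<G$ with $M$ supersoluble, you cannot conclude that $K$ normalises your chosen $L$: $L$ is normal in $H$, but $H\not\leq M$ in general, and supersolubility of $M$ only yields \emph{some} chief series of $M$ with factors of prime order, not the normality in $M$ of a prescribed subgroup of order $p$ of $N$. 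The $N\leq K$ branch, by contrast, is handled correctly via \cite[Theorem~B.9.8]{DH92}. To repair the proof you would need to make genuine use of the subgroups $P_qK$ for the various $P_q\in\delta$ (for instance to control $\mathbf{O}^{p}(G)$ or to produce a normal subgroup of index $p$ in a Sylow $p$-subgroup of $G$), which is the route the cited source takes; simply decomposing $K=K_p\times K_{p'}$ and hoping the coprime part behaves is not enough.
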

\begin{proposition}\label{subnormalderived}
Let $G=AB$, be a weak normal product of $A$ and $B$ with $A$ and $B$ supersoluble and $A$ normal in $G$. Then $B'$ is a subnormal subgroup of $G$.
\end{proposition}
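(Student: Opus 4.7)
The plan is to argue by induction on $|G|$, reducing to a minimal counterexample in which the permutability hypothesis produces a clean commutator estimate, and then to conclude via the normal closure of $B'$.

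Suppose $G$ is a minimal counterexample. Since $A$ and $B$ are supersoluble, $G$ is soluble. Factor groups of weak normal products are weak normal products, so for any nontrivial normal subgroup $N$ of $G$ the quotient $G/N$ satisfies the hypotheses, and by minimality $B'N/N=(BN/N)'$ is subnormal in $G/N$, i.e.\ $B'N$ is subnormal in $G$. Choosing $N=\Phi(G)$ and handling $B'\Phi(G)$ inside $G$ lets us reduce to $\Phi(G)=1$, so that every Sylow subgroup $A_p$ of $A$ is elementary abelian.

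The crux is to prove $[A,B']\leq A\cap B$. Fix a prime $p$ and a maximal subgroup $M$ of $A_p$. The condition $MB=BM$ together with $A\triangleleft G$ yields, by the order identity
\[
|MB\cap A|=\frac{|M||A\cap B|}{|M\cap B|}=|M(A\cap B)|,
\]
the equality $MB\cap A=M(A\cap B)$, and since $B$ normalizes both $A$ and $MB$ it normalizes $M(A\cap B)$. When $A_p$ is normal in $G$ (as for $p$ the top of the Sylow tower of $A$), intersecting with $A_p$ shows that $B$ normalizes the maximal subgroup $M\leq A_p$ whenever $(A\cap B)_p\leq M$. Hence $B$ fixes every hyperplane of the elementary abelian $A_p$ through the image of $(A\cap B)_p$, and by the standard linear-algebra lemma $B$ acts by scalars on $A_p/(A\cap B)_p$. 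Consequently $B'$ acts trivially, giving $[B',A_p]\leq (A\cap B)_p$. Iterating the same argument in the quotients $G/(A_{p_1}\cdots A_{p_i})$ (again weak normal products, again Frattini-free) handles the remaining primes up the Sylow tower of $A$, and combining gives $[A,B']\leq A\cap B$.

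With the estimate in hand, I would compute the normal closure $B'^G$. For $a\in A$ and $b'\in B'$, the identity $(b')^a=b'[b',a]$ places $(b')^a$ in $B'(A\cap B)$; since $B'(A\cap B)\leq B$ is $B$-invariant (both factors are normal in $B$), it follows that $B'^G\leq B'(A\cap B)\leq B$. Then $B'^G\triangleleft G$ is a normal subgroup of $G$ contained in $B$, so it is normal in $B$; and $B'\triangleleft B$ with $B'\leq B'^G$, whence $B'\triangleleft B'^G$. The chain $B'\triangleleft B'^G\triangleleft G$ exhibits the required subnormality, contradicting the choice of $G$. The main obstacle is the commutator estimate $[A,B']\leq A\cap B$: the weak direct product special case ($A\cap B=1$) yields "$B$ normalizes every maximal subgroup of each Sylow of $A$" for free via Lemma~\ref{factor}(b), but in the general case only the maximal subgroups containing $(A\cap B)_p$ are directly controlled, and one must route the remaining information through the scalar-action lemma; the climb up the Sylow tower of $A$ via appropriate quotients, together with the Frattini reductions in the minimal counterexample, must be performed with care.
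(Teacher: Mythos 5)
Your closing strategy (the estimate $[A,B']\leq A\cap B$ forces $(B')^G\leq B'(A\cap B)\leq B$, whence $B'\trianglelefteq (B')^G\trianglelefteq G$) is sound, and the hyperplane argument for a normal elementary abelian Sylow subgroup of $A$ is a genuinely different and attractive idea from the paper's route. But two of your reductions have real gaps. First, the Frattini reduction: minimality gives you that $B'\Phi(G)$ is subnormal in $G$, but that does not let you "reduce to $\Phi(G)=1$", because a subgroup $H$ need not be subnormal in $HN$ even when $N$ is a nilpotent normal subgroup (take $H=C_2$ and $N=C_3$ in $S_3$). Deducing subnormality of $B'$ from subnormality of $B'N$ requires an extra ingredient; the paper supplies it by using that $B'$ is \emph{nilpotent} (since $B$ is supersoluble), so that subnormal pieces can be pushed into Fitting subgroups: it shows $B'\leq \mathbf{F}(A_pB)$ and then $B'N\leq \mathbf{F}(G)$. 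You never use the nilpotency of $B'$, and that is the missing tool.

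Second, even granting $\Phi(G)=1$, it is false that every Sylow subgroup of $A$ is then elementary abelian: only $\mathbf{F}(A)$ is controlled. For instance $A=C_{19}\rtimes C_9$ with faithful action is supersoluble with $\Phi(A)=1$ and a non-elementary-abelian Sylow $3$-subgroup. Your proposed repair --- climbing the Sylow tower through the quotients $G/(A_{p_1}\cdots A_{p_i})$, "again Frattini-free" --- does not work, since quotients of Frattini-free groups need not be Frattini-free; so the estimate $[A_q,B']\leq A\cap B$ is only established for the top prime of the tower, and $[A,B']\leq A\cap B$ is not proved. (You flag this yourself at the end, which is an accurate self-assessment.) For comparison, the paper avoids all of this: it reduces, via minimality of $|G|+|A|+|B|$ and the Fitting-subgroup argument above, to the case $G=A_pB$ with $A=A_p$ a $p$-group, quotes the factorisation theorem of Ballester-Bolinches and Pedraza-Aguilera to conclude $G$ is supersoluble there, and then gets $B'\leq G'\leq \mathbf{F}(G)$ for free. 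If you want to salvage your direct approach, the case $A=A_p$ normal elementary abelian is where your hyperplane lemma genuinely bites, but you would still need the paper's reduction machinery to get there.
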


\begin{proof}
Assume the result is not true and let $G$ be a counterexample of minimal order with $|A|$ as small as possible. Let $p$ be the largest prime dividing the order of $A$. Then $A$ has a normal Sylow $p$-subgroup $A_{p}$ which is also a normal subgroup of $G$. Let $N$ be a minimal normal subgroup of $G$ such that $N \leq A_{p}$. It is clear that $A_{p}B$ satisfies the hypotheses of the theorem. Assume that $A_{p}B$ is a proper subgroup of $G$. By the minimality of $G$, $B'$ is a subnormal subgroup of $A_{p}B$. Hence $B' \leq F(A_{p}B)$. By Lemma~\ref{factor}(a), $G/N$ is a weak normal product of $A/N$ and $BN/N$. By the minimality of $G$ we have that $B'N$ is a subnormal subgroup of $G$. Since $N \leq F(A_{p}B)$, it follows that $B'N \leq F(A_{p'}B)$. Hence $B'N$ is a subnormal nilpotent subgroup of $G$. Consequently $B'N \leq F(G)$. Thus $B'$ is a subnormal subgroup of $G$, a contradiction. Hence we may assume that $G=A_{p}B$. The minimality of $|A|$ implies $A=A_{p}$. Applying now the above Lemma, we conclude that  $G$ is supersoluble and therefore $G'$ is nilpotent. Hence $B'$ is subnormal in $G$. This final contradiction proves the proposition.
\end{proof}

\begin{proof}[\textbf{Proof of Corollary D}]
Arguing as in \cite[Theorem 1]{M18}, we obtain $G^{\mathfrak{U}}=(G')^{\mathfrak{N}}$. Moreover, by \cite[Lemma 1(3)]{M18} we have that $G'=A'B'[A,B]=(A')^{G}(B')^{G}[A,B]$. Since $A$ is a normal subgroup of $G$, then $A'$ is a subnormal subgroup of $G$. Also the application of Proposition \ref{subnormalderived} yields $B'$ is subnormal in $G$ and both $A'$ and $B'$ are nilpotent. Hence $(A')^{G}(B')^{G}$ is a normal nilpotent subgroup of $G$. By \cite[II, Lemma~II.2.12]{DH92}, $(G')^{\mathfrak{N}}=((A')^{G}(B')^{G})^{\mathfrak{N}}[A,B]^{\mathfrak{N}}=[A,B]^{\mathfrak{N}}$, as desired.
\end{proof}

\section{Acknowledgements}
These results are part of the R+D+i project supported by the Grant
PGC2018-095140-B-I00, funded by MCIN/AEI/10.13039/501100011033 and by ``ERDF A way of making Europe'' and
the Grant PROMETEO/2017/057
funded by GVA/10.13039/501100003359.

\end{document}